\newtheorem{theorem}{Theorem}[section]
\newtheorem{lemma}[theorem]{Lemma}
\newtheorem{definition}[theorem]{Definition}
\newtheorem{example}[theorem]{Example}
\newtheorem{corollary}[theorem]{Corollary}
\newtheorem{remark}[theorem]{Remark}
\def\C{{\mathbb{C}}}
\def\R{{\mathbb{R}}}
\begin{document}
\title{Direct spectral problems for Paley-Wiener canonical systems}
\author{Ashley R. Zhang}
\address{Vanderbilt University\\ Department of Mathematics\\
		1326 Stevenson Center\\
		Nashville, TN  37240\\ USA }
\email{ashley.zhang@vanderbilt.edu}

\begin{abstract}

This note focuses on the direct spectral problem for canonical Hamiltonian systems on the half-line $\mathbb{R}_+$. Truncated Toeplitz operators have been effectively used to solve the inverse spectral problem when the spectral measure is a locally finite periodic measure (see \cite{MP}). Here, we reverse the inverse problem algorithm to solve the direct spectral problem for step-function Hamiltonians. For a non-step-function Hamiltonian, we consider its step-function approximations and their corresponding spectral measures, and show that these spectral measures converge to the spectral measure of the original Hamiltonian.

\end{abstract}
\maketitle

\section{Introduction}

The main problem we study in this note is spectral problems for canonical Hamiltonian systems on the half-line $\mathbb{R}_+$. A standard tool in this field is Krein-de Branges (KdB) theory, first established in the mid-20th century by M. G. Krein. Krein observed numerous connections between structural problems in certain spaces of entire functions and spectral problems for canonical systems. L. de Branges later developed the complex analytic aspects of this theory. The fundamentals of KdB theory and additional references can be found in the original book by de Branges \cite{dB}, in a chapter of the book by Dym and McKean \cite{DM}, and in recent monographs by C. Remling \cite{Remling} and R. Romanov \cite{Rom}.

Over the past two decades, KdB theory has experienced another surge of activity, driven by its applications to various other areas of analysis such as the theory of orthogonal polynomials and the nonlinear Fourier transform (see, for instance \cite{Denisov, Scatter, Zhang}), as well as its connections with various other areas of mathematics including number theory, random matrices, and the zeros of the zeta function (\cite{Burnol, Lagarias1, Lagarias2, Benedek}).

Canonical systems on the half-line $\mathbb{R}_+$ are $2 \times 2$ differential equation systems of the form \begin{equation} \label{CS}
    \Omega \Dot{X}(t) = z H(t) X(t), \ \ t \in (0, t_+),\ \  t_+ \leqslant \infty,
\end{equation}
where $z\in\C$ is a spectral parameter,
\begin{equation*}
    \Omega = \begin{pmatrix} 0 & 1 \\ -1 & 0 \end{pmatrix}
    \end{equation*}
is the symplectic matrix,
\begin{equation*}
   H(t) = \begin{pmatrix} h_{11}(t) & h_{12}(t) \\ h_{21}(t) & h_{22}(t) \end{pmatrix}
\end{equation*}
is a given matrix-valued function called the Hamiltonian of the system, and
\begin{equation*}
    X(t) = \begin{pmatrix} u(t) \\ v(t) \end{pmatrix} 
    \end{equation*}
is the unknown vector function.

We make the following assumptions on the Hamiltonian $H(t)$:
\begin{itemize}
    \item $H(t) \in \mathbb{R}^{2 \times 2}$;
    \item $H(t) \in L^1_{loc}(\mathbb{R})$;
    \item $H(t) \geqslant 0$ almost everywhere;
    \item $\det(H) \neq 0$ almost everywhere.
\end{itemize} 

The first three are standard assumptions on a Hamiltonian, and the fourth is a restriction. This restriction implies that the system does not have any 'jump intervals' (sub-intervals of $(0,t_+)$ on which $H$ is a constant matrix of rank $1$).

With the change of variable $ds=\det H(t) dt$, we can normalize a canonical system to satisfy $\det \left( H(s) \right) = 1$ almost everywhere. We call such systems det-normalized and assume this normalization throughout the rest of the paper. Details on this change of variable can be found for instance in \cite{MP}.

A new approach to the inverse spectral problem for canonical systems was recently introduced in \cite{BR, Bessonov, MP}. This method is based on the study of truncated Toeplitz operators with symbols equal to the spectral measures of canonical systems. Compared to the systems studied in the classical works of Borg \cite{Borg1, Borg2}, Marchenko \cite{Mar1, Mar2}, and Gelfand-Levitan \cite{GL}, this approach broadens the scope of systems that can be analyzed and provides new explicit examples of solutions to the inverse spectral problem. The Toeplitz approach applies to systems whose spectral measures belong to the Paley-Wiener class (PW-class), the class of sampling measures in Paley-Wiener spaces. This is significantly broader than the Gelfand-Levitan class considered in the classical theory.

As discussed in \cite{MP}, this new approach establishes a direct connection between spectral problems and the theory of orthogonal polynomials when the spectral measure is a locally finite periodic measure. One of our goals in this note is to use this connection to solve the direct spectral problem and develop a finite-dimensional algorithm \eqref{Algorithm} that complements the inverse problem algorithm presented in \cite{MP}.

In the case of a locally finite periodic spectral measure, as shown in \cite{MP}, the Hamiltonian of the canonical system is always a step function with a uniform step size. For the inverse spectral problem, one can periodize a non-periodic spectral measure $\mu$ and use the Hamiltonians of these periodizations to approximate the original Hamiltonian. This approach was shown to be effective for measures in the PW-class, and certain measures outside the PW-class, see \cite{PZ}. Our other goal in this note is to use a similar approach to solve the direct spectral problem within the PW-class. For a non-step-function PW-Hamiltonian $H$, we can consider its step-function approximations $H^T$ and apply tools from the step-function case to find spectral measures $\mu_T$. For a particular class of PW-Hamiltonians, those arising from real Dirac systems, we establish $\mu_T \to \mu$ as $T \to 0$. Whether this periodization approach extends to certain Hamiltonians outside the PW-class, similar to several results in \cite{PZ}, remains an interesting open question.

This paper is organized as follows: In section \ref{CSdB}, we review the basics of Krein-de Branges theory of canonical systems, introduce Paley-Wiener systems, and revisit the algorithm for the inverse spectral problem for even periodic PW-measures. In sections \ref{alpha} and \ref{Moments}, we present two parallel algorithms for the direct spectral problem when the Hamiltonian is diagonal and is a step function with a uniform step size. In section \ref{Approx}, we extend the algorithm to non-step-function Hamiltonians. Finally, in section \ref{examples}, we provide examples illustrating our methods and formulas.

\section{Canonical systems and de Branges spaces}\label{CSdB}
\subsection{De Branges spaces}
A solution to \eqref{CS} is a $C^2-$function 
\begin{equation*}
    X(t) = X(t, z) = \begin{pmatrix} u(t, z) \\v(t, z) \end{pmatrix}
\end{equation*}
on $(0, t_+)$ satisfying the equation. An initial value problem (IVP) for \eqref{CS} is given by an initial condition $X(0, z) = c$, $c \in \mathbb{R}^2$. It is well known that every IVP for \eqref{CS} has a unique solution $X(t, z)$ on $(0, t_+)$, see for instance chapter 1 of \cite{Remling}.

We use $H^2(\mathbb{C}_+)$ to denote the Hardy space on the upper half-plane $\mathbb{C}_+$: \begin{equation*}
    H^2(\mathbb{C}_+) = \{ f \in \mathcal{H}(\mathbb{C_+})~|~ \sup_{y > 0} \int_{\mathbb{R}} \abs{f(x + iy)}^2 < \infty\},
\end{equation*}
where $\mathcal{H}(C_+)$ is the set of all analytic functions in $\mathbb{C_+}$.

An entire function $E(z)$ is called a \textit{Hermite-Biehler function} if $|E(z)| > |E(\overline{z})|$ for $z \in \mathbb{C}_+$.
Given an entire function $E(z)$ of Hermite-Biehler class, define the \textit{de Branges space} $B(E)$ based on $E$ as \begin{equation*}
    B(E) = \left\{F \text{~entire}\ |\ F/E, F^\#/E \in H^2(\mathbb{C_+}) \right\},
\end{equation*}
where $F^\#(z) = \overline{F(\overline{z})}$.  The space $B(E)$ becomes a Hilbert space when endowed with the scalar product inherited from $H^2(\C_+)$: \begin{equation*}
    [F, G] = \int_{-\infty}^\infty F(t) \overline{G(t)} \frac{dt}{|E(t)^2|}.
\end{equation*}
We call an entire function real if it is real-valued on $\R$. Associated with $E$ are two real entire functions  $A = \frac{1}{2}(E + E^\#)$ and $C = \frac{i}{2}(E - E^\#)$ such that $E=A-iC$.
De Branges spaces are also reproducing kernel Hilbert spaces. The reproducing kernels 
\begin{equation*}
    K_\lambda(z) = \frac{\overline{E(\lambda)} E(z) - \overline{E^\#(\lambda)} E^\#(z)}{2\pi i(\overline{\lambda} - z)} = \frac{\overline{C(\lambda)} A(z) - \overline{A(\lambda)}C(z)}{\pi(\overline{\lambda} - z)}
\end{equation*}
are functions from $B(E)$ and satisfy $[F, K_{\lambda}] = F(\lambda)$ for all $F \in B(E)$, $\lambda \in \mathbb{C}$.

De Branges spaces have an alternative axiomatic definition, which is useful in many applications.

\begin{theorem}[\cite{dB}] \label{dBAxioms}
Suppose that $H$ is a Hilbert space of entire functions that satisfies \begin{itemize}
\item $F \in H$, $F(\lambda) = 0$ $\Rightarrow$ $F(z) \frac{z - \overline{\lambda}}{z - \lambda} \in H$ with the same norm,
\item $\forall \lambda \notin \mathbb{R}$, the point evaluation is a bounded linear functional on $H$,
\item $F \to F^\#$ is an isometry.
\end{itemize}
Then $H = B(E)$ for some entire function $E$ of Hermite-Biehler class.
\end{theorem}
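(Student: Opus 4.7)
\medskip

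\noindent\emph{Proof proposal.} The plan is to construct a Hermite--Biehler function $E$ directly from the reproducing kernels of $H$ and then identify $H$ with $B(E)$ as Hilbert spaces. First, axiom (2) and the Riesz representation theorem produce, for each non-real $w$, a reproducing kernel $K_w\in H$ with $[F,K_w]=F(w)$ for every $F\in H$. Axiom (3), translated through conjugate-linearity of $F\mapsto F^\#$, gives the symmetry $K_w^\#=K_{\bar w}$, so kernels in $\mathbb{C}_-$ are determined by those in $\mathbb{C}_+$. Moreover $K_w(w)>0$ for every non-real $w$ (assuming $H$ is nontrivial), since otherwise axiom (1) combined with the reproducing property would force every element of $H$ to vanish at $w$.

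The heart of the argument is the de Branges identity: the claim that there exist two fixed real entire functions $A$ and $C$ with
\[
K_w(z)=\frac{\overline{C(w)}\,A(z)-\overline{A(w)}\,C(z)}{\pi(\bar w-z)},\qquad w\in\mathbb{C}\setminus\mathbb{R}.
\]
Axiom (1) is the engine here. Fix $\lambda\in\mathbb{C}_+$. For any $F\in H$, the corrected function $\tilde F:=F-\frac{F(\bar\lambda)}{K_\lambda(\bar\lambda)}K_\lambda$ vanishes at $\bar\lambda$, so axiom (1) gives $T_\lambda\tilde F(z):=\tilde F(z)\frac{z-\lambda}{z-\bar\lambda}\in H$ with $\|T_\lambda\tilde F\|=\|\tilde F\|$. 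Polarizing this norm identity against a second such function $\tilde G$, then substituting $F=K_w$ and $G=K_{w'}$ and isolating the $z$-dependence via the reproducing property, one extracts the displayed identity with $A$ and $C$ obtained as the real and imaginary parts (after suitable normalization) of $K_{w_0}$ for a fixed base point $w_0\in\mathbb{C}_+$. Reality of $A$ and $C$ is enforced by axiom (3). This polarization bookkeeping, together with verifying that the two $w$-dependent factors assemble into a bona fide pair of real entire functions in $z$, is the main technical obstacle.

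With this identity in hand, set $E:=A-iC$, so $E^\#=A+iC$. The Hermite--Biehler inequality $|E(w)|>|E(\bar w)|$ on $\mathbb{C}_+$ amounts, after a short computation from the kernel formula, to the strict positivity $K_w(w)>0$ for $w\in\mathbb{C}_+$ noted above. Finally, the displayed formula for $K_w$ is exactly the reproducing kernel of $B(E)$ recorded immediately before the theorem. Since the finite linear span of reproducing kernels is dense in any reproducing-kernel Hilbert space, and inner products of these kernels determine the Hilbert-space structure, the identity map on reproducing kernels extends to an isometric isomorphism $H\to B(E)$, yielding $H=B(E)$ with equality of norms.
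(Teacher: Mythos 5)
The paper states this theorem as a citation to de Branges' book \cite{dB} and does not reproduce a proof, so there is no in-paper argument to compare against; I am therefore assessing your attempt on its own merits.

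Your outline tracks the correct overall strategy, which is indeed essentially de Branges' own route: Riesz representation plus axiom (2) gives reproducing kernels $K_w$ for non-real $w$; axiom (3) forces $K_{\bar w}=K_w^{\#}$; positivity $K_w(w)>0$ holds for nontrivial $H$; the de Branges kernel identity lets one read off $A,C$; and the Hermite--Biehler inequality reduces to $K_w(w)>0$ once that identity is in hand. The concluding step, passing from equality of reproducing kernels to equality of Hilbert spaces via density of the kernel span, is also fine.

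However, you explicitly defer the central step: extracting from axiom (1), by polarization, the structural identity
\[
K_w(z)=\frac{\overline{C(w)}\,A(z)-\overline{A(w)}\,C(z)}{\pi(\bar w - z)}
\]
with a single fixed pair of real entire functions $A,C$ valid simultaneously for all non-real $w$. You write that this ``is the main technical obstacle'' --- and it is; essentially the entire content of de Branges' Theorem~23 lives there. One must actually carry out the polarization of the isometry $\|T_\lambda \tilde F\|=\|\tilde F\|$, evaluate the resulting bilinear form on kernels, derive from it a four-point algebraic identity for $L(w,z):=\pi(\bar w-z)K_w(z)$, and then show that this identity forces $L$ to factor through a two-dimensional space of entire functions of $z$, with coefficients depending conjugate-linearly on $w$. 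Asserting that this can be done is not a proof of it, so as written the proposal is an annotated outline rather than a complete argument.

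Two smaller but real slips. First, your projection $\tilde F=F-\frac{F(\bar\lambda)}{K_\lambda(\bar\lambda)}K_\lambda$ requires $K_\lambda(\bar\lambda)\neq 0$, which is not guaranteed; the standard correction uses $K_{\bar\lambda}$ and divides by $K_{\bar\lambda}(\bar\lambda)=\|K_{\bar\lambda}\|^2>0$. Second, $A$ and $C$ are not the real and imaginary parts of $K_{w_0}$: the kernel $K_{w_0}$ itself is not a linear combination of $A$ and $C$ because of the factor $\pi(\bar w_0-z)$ in the denominator. What is true is that the real and imaginary parts of the entire function $\pi(\bar w_0-z)K_{w_0}(z)$ span the same two-dimensional real space as $\{A,C\}$ provided $\mathrm{Im}\bigl(\overline{A(w_0)}C(w_0)\bigr)\neq 0$, i.e.\ provided the Hermite--Biehler inequality already holds at $w_0$ --- a nonvanishing condition you would need to justify rather than assume. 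These points are fixable, but the missing polarization argument is the substantive gap.
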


The most elementary example of de Branges spaces is the Paley-Wiener spaces. We denote by $PW_a$ the standard Paley-Wiener space: the subspace of $L^2(\R)$ consisting of entire functions of exponential type at most $a$. $PW_a = B(E)$ with $E(z) = e^{-iaz}$.
The reproducing kernels of $PW_a$ are the sinc functions,
$$\frac{\sin(t(z-\lambda))}{\pi( z-\lambda)}.$$

\subsection{Spectral measures}
Let $X(t, z) = \begin{pmatrix} u(t, z) \\v(t, z) \end{pmatrix}$  be the unique solution of \eqref{CS} on $(0, t_+)$
satisfying a self-adjoint initial condition $X(0, z) = c$, $c \in \mathbb{R}^2$. Krein first observed that the function $E(t, z) = u(t, z) - i v(t, z)$ is a Hermite-Biehler entire function for every fixed $t$. 

Every det-normalized canonical system delivers a chain of nested de Branges spaces $B(E_t)$: if  $0 < t_1 < t_2 \leqslant t_+$, then $B(E_{t_1}) \subseteq B(E_{t_2})$.
In the absence of jump intervals, which follows from the $\det(H) \neq 0$ almost everywhere condition we imposed on $H(t)$, all such inclusions are isometric, see \cite{dB, Remling}. 

A positive measure $\mu$ on $\R$  is called the spectral measure of \eqref{CS} corresponding to a fixed self-adjoint boundary condition at $0$ if all de Branges spaces $B(E_t)$, $t \in (0, t_+)$ are isometrically embedded into $L^2(\mu)$. Under the assumptions we imposed on the system, such a spectral measure always exists, and it is unique if and only if \begin{equation*}
    \int_{0}^{t_+} \text{trace~} H(t) dt = \infty.
\end{equation*}
The case when the integral above is infinite is called the limit point case, otherwise it is called the limit circle case. 

We say that a positive measure $\mu$ on $\R$ is Poisson-finite if
$$\int_\R\frac{d \mu(x)}{1+x^2}<\infty.$$

Under the condition  $H(t) \in L^1_{loc}$, any spectral measure $\mu$ is Poisson-finite, see for instance \cite{Remling}. We are mostly interested in det-normalized canonical systems on the half line, in other words $t_+ = \infty$. Therefore, we are in the limit point case.

We pay special attention to the Neumann condition, $X(0, z) = \begin{pmatrix} 1 \\ 0 \end{pmatrix}$, and the Dirichlet condition, $X(0, z) = \begin{pmatrix} 0 \\ 1 \end{pmatrix}$. Throughout the paper, we use $E(t, z) = u(t, z) - i v(t, z)$ and $\mu$ ($\Tilde{E}(t, z) = \Tilde{u}(t, z) - i \Tilde{v}(t, z)$ and $\Tilde{\mu}$) to denote the Hermite-Biehler entire function and the spectral measure corresponding to the Neumann (Dirichlet) condition at $0$.

Instead of the two-dimensional vector differential equation system, one can also study the $2 \times 2$ matrix that solves \eqref{CS}. The matrix $M(t, z)$ satisfying the initial value condition $M(0, z) = \begin{pmatrix}1 & 0 \\ 0 & 1 \end{pmatrix}$ is called the \textit{transfer matrix} or the \textit{matrizant} of the system. Note that the first column of $M$ is the vector solution of \eqref{CS} satisfying the Neumann condition at $t = 0$, and the second column is the solution satisfying the Dirichlet condition. It follows from \eqref{CS} that $\det M(t, z) = 1$ for all $t$ and $z$.

\subsection{PW-measures and PW-systems}
This subsection is based on \cite{MP} and references therein.

\begin{definition}
A positive Poisson-finite measure $\mu$ is sampling for the Paley-Wiener space $PW_a$ if there exist constants $0 < c < C$ such that for all $f \in PW_a$ \begin{equation*}
    c \norm{f}_{PW_a} \leqslant \|f\|_{L^2(\mu)} \leqslant C \norm{f}_{PW_a}.
\end{equation*}
\end{definition}
It is not difficult to show that any PW-measure is Poisson-finite. For this and other properties of PW-measures, see \cite{MP} and references therein.

We call a canonical system on the half-line $\mathbb{R}_+$ a PW-system if the corresponding de Branges spaces $B(E_t) $ are equal to $PW_t$ as sets for all $t \in (0, \infty)$.
In this case the spectral measure $\mu$ of the system is a PW-measure. It follows that PW-systems can be equivalently defined as those systems whose spectral measures belong to the PW-class.

The class of PW-measures admits the following elementary description. 

\begin{definition}
Let $\mu$ be a positive measure on $\R$. We call an interval $I \subseteq \mathbb{R}$ a $(\mu, \delta)$-interval if \begin{equation*}
    \mu(I) > \delta, \quad \text{and} \quad \abs{I} > \delta,
\end{equation*}
where $\abs{I}$ stands for the length of the interval $I$.
\end{definition}

\begin{theorem}\label{PWcondition}\cite{MP}
A positive Poisson-finite measure $\mu$ is a Paley-Wiener measure if and only if \begin{itemize}
    \item $\sup_{x \in \mathbb{R}} (x, x + 1) < \infty$,
    \item For any $d > 0$, there exists $\delta > 0$ such that for all sufficiently large intervals $I$, there exist at least $d \abs{I}$ disjoint $(\mu, \delta)$-intervals intersecting $I$.
\end{itemize}
\end{theorem}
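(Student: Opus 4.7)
The proof splits into the two implications. For the forward direction, I test the sampling inequalities against carefully chosen functions in $PW_a$. To obtain the first condition, fix $x \in \R$ and pick $f_x \in PW_a$ with $|f_x|^2 \geq 1$ on $(x, x+1)$ and $\|f_x\|_{PW_a}^2 \leq M$ uniformly in $x$ --- a shifted, squared sinc of type $a$ works up to a constant. Plugging $f_x$ into the upper sampling bound gives $\mu(x, x+1) \leq C^2 M$. For the $(\mu, \delta)$-interval condition, I argue by contradiction: if for some $d > 0$ no $\delta$ works, then for every $\delta$ one finds arbitrarily long intervals $I$ meeting fewer than $d|I|$ disjoint $(\mu, \delta)$-intervals; a band-limited cutoff of Fej\'er or Jackson type, localized to such an $I$, produces $f \in PW_a$ whose $L^2(\mu)$-mass is much smaller than its $PW_a$-mass, contradicting the lower sampling bound.

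For the converse, the upper sampling bound follows from the Plancherel--Polya inequality: uniform boundedness of $\mu(x, x+1)$ combined with the pointwise estimate $\sup_{[n, n+1]} |f|^2 \lesssim \int_{n-1}^{n+2} |f|^2 \, dt$ for $f \in PW_a$ yields $\|f\|_{L^2(\mu)}^2 \leq K'(a) \|f\|_{PW_a}^2$ after partitioning $\R$ into unit intervals.

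The converse lower bound is the main obstacle, and the plan is to use a localized Bernstein-type argument to convert mass control of $\mu$ into $L^2$ control of $f$. Given a target density $d$ (to be chosen large in terms of $a$), let $\delta$ be the associated scale from the hypothesis. The key local estimate is that for every $(\mu, \delta)$-interval $J$ one has
\begin{equation*}
\int_J |f|^2 \, d\mu \geq c(\delta, a) \int_{J^*} |f|^2 \, dt
\end{equation*}
for a controlled enlargement $J^*$, which one proves by a compactness/normal families argument in $PW_a$ together with the quantitative bounds $\mu(J) \geq \delta$ and $|J| \geq \delta$ --- the essential input being that an exponential-type-$a$ function cannot vanish $\mu$-a.e.\ on a set of mass at least $\delta$ carried by an interval of length at least $\delta$. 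Summing this local estimate over the $d|I|$ disjoint $(\mu, \delta)$-intervals meeting a long interval $I$, and choosing $d$ large enough that the enlargements $J^*$ cover $I$ with bounded overlap, yields $\int_I |f|^2 \, d\mu \geq c \int_I |f|^2 \, dt$. Letting $|I| \to \infty$ and invoking Plancherel on $\R$ delivers the lower sampling bound. The delicate part is the coordination between $d$, the Bernstein/Plancherel--Polya constants (both depending on $a$), and the overlap factor, so that the final constant $c$ is strictly positive.
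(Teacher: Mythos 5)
First, a note on framing: the paper does not prove Theorem~\ref{PWcondition}; it is quoted from \cite{MP} and used as an imported tool. There is therefore no internal proof to compare against, so I am reviewing your sketch on its own terms.

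Your forward direction (testing against shifted sinc and Fej\'er-type functions) and your converse upper bound (Plancherel--P\'olya on a unit partition) are standard and sound in outline. The gap is in the converse lower bound, and it is a genuine one: the local estimate you propose,
\begin{equation*}
\int_J |f|^2\, d\mu \geq c(\delta, a) \int_{J^*} |f|^2\, dt \qquad \text{for every } (\mu,\delta)\text{-interval } J \text{ and every } f \in PW_a,
\end{equation*}
is simply false, and no normal-families argument can salvage it. Take $\mu = \delta_0$, so that $J = [-1,1]$ is a $(\mu,1)$-interval, and take $f(z) = \sin^2(az/2)/z \in PW_a$, which vanishes at the origin. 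The left-hand side is exactly $0$ while the right-hand side is positive. The ``essential input'' you invoke --- that an exponential-type-$a$ function cannot vanish $\mu$-a.e.\ on a set of mass $\geq \delta$ inside an interval of length $\geq \delta$ --- does not hold: the entire $\mu$-mass can sit at a single point where $f$ has a zero. In a compactness argument, weak-$*$ limits of the restricted measures $\mu|_J$ (after translating $J$ to a fixed window) can perfectly well degenerate to a point mass, so the limit only produces the configuration above rather than a contradiction. Because there is no uniform local lower bound, your ``sum the local estimate over the $d|I|$ intervals'' step does not get off the ground.

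What actually makes the theorem true is that the density hypothesis must be exploited globally: a nonzero $f \in PW_a$ has real-zero density at most $a/\pi$ by Plancherel--P\'olya, so once $d$ exceeds roughly $a/\pi$ the function cannot be small near \emph{all} of the $d|I|$ disjoint $(\mu,\delta)$-intervals in $I$ simultaneously. The standard route (Beurling, Seip, Ortega-Cerd\`a--Seip, and the argument in \cite{MP}) is by contradiction with a global weak-limit: normalize $f_n \in PW_a$ with $\norm{f_n}_{PW_a}=1$ and $\norm{f_n}_{L^2(\mu)} \to 0$, translate so the $L^2(dt)$-mass of $f_n$ stays near the origin, pass to locally uniform limits of $f_n$ and weak-$*$ limits of the translated $\mu$, and show the nonzero limit function cannot annihilate a limit measure that inherits the density condition. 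That is a compactness argument, but one carried out over the whole line against a sequence of translated measures --- not a per-interval estimate. You would need to replace your local lemma with this global scheme, or with an equivalent Beurling-density argument, to close the proof.
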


We say that a measure $\mu$ has locally infinite support if $\text{supp\ }\mu\cap [-C,C]$ is an infinite set for some $C > 0$, or equivalently if $\text{supp\ }\mu$ has a finite accumulation point.
For a periodic measure one can easily deduce the following result:

\begin{corollary}\label{PWper}\cite{MP}
A positive locally finite periodic measure is a Paley-Wiener measure if and only if it has locally infinite support.
\end{corollary}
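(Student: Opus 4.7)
My plan is to reduce the corollary to the PW-characterization of Theorem \ref{PWcondition}. For any locally finite $p$-periodic measure $\mu$, the first clause of that theorem (uniform mass bound on unit intervals) is automatic, since $\mu((x, x+1)) \leq (\lceil 1/p \rceil + 1)\mu([0,p))$ uniformly in $x$. Everything therefore reduces to controlling the density of disjoint $(\mu,\delta)$-intervals in the second clause.

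For the ``only if'' direction I would argue contrapositively. If $\text{supp}\,\mu$ is locally finite, periodicity forces $\text{supp}\,\mu \cap [0,p)$ to be a finite set $\{x_1,\dots,x_n\}$, and one easily checks that $\mu$ must be a $p$-periodic sum of $n$ Dirac masses. Fix $\delta > 0$ and let $J_1 < \cdots < J_k$ be disjoint $(\mu,\delta)$-intervals intersecting $I=[0,L]$, ordered by position. The key geometric observation is that $J_1$ has right endpoint $\geq 0$ and $J_k$ has left endpoint $\leq L$; combined with disjointness, this forces the inner intervals $J_2,\dots,J_{k-1}$ to lie entirely inside $I$. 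Each of these $k-2$ interior intervals has positive mass and therefore contains at least one atom of $\mu$, and disjointness makes these atoms distinct. Hence $k-2$ is bounded by the number of atoms in $I$, which is at most $n(L/p+2)$, giving $k/L \leq n/p + o(1)$ as $L\to\infty$, \emph{uniformly in $\delta$}. Taking any $d > n/p$ in Theorem \ref{PWcondition} then falsifies the PW criterion.

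For the ``if'' direction, fix $d>0$ and choose an integer $k > dp$. Since $\text{supp}\,\mu \cap [0,p)$ is infinite, I can pick $k$ distinct support points $x_1,\dots,x_k$ in this interval and choose $\eta > 0$ so that the open balls $B_i = (x_i - \eta, x_i + \eta)$ are pairwise disjoint and contained in a single fundamental domain. Each $B_i$ satisfies $\mu(B_i) > 0$ because $x_i \in \text{supp}\,\mu$, so choosing $\delta > 0$ strictly smaller than both $\eta$ and $\min_i \mu(B_i)$ makes every $B_i$ a $(\mu,\delta)$-interval. Periodic translates then yield at least $k(\lfloor L/p\rfloor - 1)$ disjoint $(\mu,\delta)$-intervals inside any interval of length $L$; since $k/p > d$, this exceeds $dL$ for all sufficiently large $L$, verifying the density condition.

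The main obstacle I anticipate is controlling the boundary effect in the contrapositive argument: a naive sum-of-measures bound $k\delta < \mu(I)$ only yields $k \lesssim \mu(I)/\delta$, which is useless once $\delta$ is allowed to be arbitrarily small. The geometric fact that at most the two extreme $(\mu,\delta)$-intervals can stick out of $I$ is what replaces this with a $\delta$-independent atom-counting bound and produces the sharp ceiling $n/p$ on the density. Once this observation is isolated, both directions reduce to elementary counting.
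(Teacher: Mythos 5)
Your proof is correct and follows the route the paper intends: the corollary is stated immediately after Theorem \ref{PWcondition} with the remark that ``one can easily deduce'' it, and your argument is precisely that deduction, with the key $\delta$-independent atom-counting bound (via the observation that at most the two outermost $(\mu,\delta)$-intervals can protrude from $I$) supplying the only nontrivial step. No gaps.
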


\subsection{Spectral problems}

Direct spectral problems for differential operators aim at finding the spectral data of a given operator. Inverse spectral problems seek to recover a differential operator from known spectral data. We now review some results related to spectral problems.

Because of the following theorem, the prototype case in spectral problems is a diagonal Hamiltonian, or equivalently, an even spectral measure on the real line. This is the setting we consider from this point onward.

\begin{theorem}[\cite{MP}]
If the Hamiltonian $H$ is diagonal, then the spectral measure $\mu$ is even. Conversely, if $\mu$ is an even positive Poisson-finite measure, then there exists a canonical system with diagonal Hamiltonian $H$ whose spectral measure is $\mu$.
\end{theorem}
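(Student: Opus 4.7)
Both directions will rest on a single parity-switching symmetry of \eqref{CS}. Set $J = \mathrm{diag}(1,-1)$. A direct check using uniqueness of the IVP shows that if $X(t,z) = (u(t,z), v(t,z))^T$ is the Neumann solution of \eqref{CS} for Hamiltonian $H$, then $JX(t,-z) = (u(t,-z), -v(t,-z))^T$ is the Neumann solution of \eqref{CS} at parameter $z$ for the reflected Hamiltonian $H' := JHJ$. Note that $H'$ is again det-normalized, and $H = H'$ if and only if $H$ is diagonal.

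For the forward direction, assume $H$ is diagonal, so $H = JHJ$. The observation above together with uniqueness of the IVP gives $u(t,-z) = u(t,z)$ and $v(t,-z) = -v(t,z)$. Hence $|E(t,x)|^2 = u(t,x)^2 + v(t,x)^2$ is even in $x$, which makes $\sigma: F(z) \mapsto F(-z)$ an isometric involution of $B(E_t)$. Composing with the isometric embedding $B(E_t) \hookrightarrow L^2(\mu)$ and polarizing yields $\int F\overline{G}\,d\mu = \int F\overline{G}\,d\mu^\vee$ for all $F, G \in \bigcup_t B(E_t)$, where $\mu^\vee$ is the pushforward of $\mu$ under $x \mapsto -x$. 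In the limit-point case, which is our setting since $t_+ = \infty$, this union is dense in $L^2(\mu)$, forcing $\mu = \mu^\vee$.

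For the converse, given an even Poisson-finite $\mu$, Krein's inverse spectral theorem supplies a det-normalized Hamiltonian $H$ whose spectral measure is $\mu$; the plan is to show $H = JHJ$. The companion system with Hamiltonian $H' := JHJ$ produces, by the symmetry above, the Hermite-Biehler function $E'(t,z) = E^{\#}(t,-z)$. Checking membership in $H^2(\C_+)$ under the reflection shows that $\tau : F \mapsto F(-\cdot)$ is an isometric isomorphism $B(E_t) \to B(E'_t)$. Translating through the two isometric embeddings into $L^2$, one identifies the spectral measure of $H'$ as $\mu^\vee$, which equals $\mu$ by assumption. Uniqueness of the det-normalized Hamiltonian in the limit-point case then gives $H = H'$, forcing $h_{12} = h_{21} = 0$.

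The main obstacle is the finicky bookkeeping in the converse: verifying that $\tau$ really respects the $H^2(\C_+)$ conditions defining $B(E'_t)$ requires carefully tracking how the reflection $z \mapsto -z$ swaps $\C_+$ and $\C_-$ and interacts with the involution $F \mapsto F^{\#}$. By contrast, the forward direction is essentially symbolic once the parities of $u$ and $v$ are recorded; its only non-trivial input is the standard density of the chain in $L^2(\mu)$ in the limit-point case.
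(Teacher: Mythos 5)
The paper cites this result from \cite{MP} and does not supply a proof of its own, so there is nothing internal to compare your argument against. That said, your proposal is essentially the standard argument based on the parity symmetry $(H,z)\mapsto(JHJ,-z)$ with $J=\mathrm{diag}(1,-1)$, and the main steps are correct: the observation that $\Omega J\Omega = J$ makes $J X(t,-z)$ the Neumann solution for $JHJ$, the identity $E(t,-z)=E^\#(t,z)$ when $H$ is diagonal (which gives evenness of $|E_t|$ on $\mathbb{R}$), and the isometric identification $B(E_t)\to B(E'_t)$, $F\mapsto F(-\cdot)$ in the converse.

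Three small points worth tightening. First, you flag the $H^2(\mathbb{C}_+)$ bookkeeping as the main obstacle, but it in fact closes cleanly: from $F/E\in H^2(\mathbb{C}_+)$ one gets $F^\#/E^\#\in H^2(\mathbb{C}_-)$, and composing with $z\mapsto-z$ (which swaps half-planes) together with $E'(z)=E^\#(-z)$ shows $F(-\cdot)/E', \ F(-\cdot)^\#/E'\in H^2(\mathbb{C}_+)$; the forward-direction check for $\sigma$ is the special case $E'=E$. Second, in the forward direction the density argument is more delicate than it looks (agreeing on inner products from a dense subspace of $L^2(\mu)$ does not by itself give equality of measures without some further reasoning); the cleaner route is exactly the one you use in the converse: the isometry shows every $B(E_t)$ embeds isometrically into $L^2(\mu^\vee)$ as well, so $\mu^\vee$ is also a spectral measure of the same system, and uniqueness of the spectral measure in the limit-point case yields $\mu=\mu^\vee$. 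Third, in the converse you invoke ``a det-normalized Hamiltonian'' for an arbitrary even Poisson-finite $\mu$; Krein's inverse theorem in general yields a Hamiltonian that may have jump intervals and hence is only trace-normalizable. Since $H\mapsto JHJ$ preserves the trace as well as the determinant, the same argument goes through with trace-normalization, so this is easily repaired (and is a non-issue under this paper's standing assumption $\det H\neq 0$ a.e., at the cost of not proving the full generality of the cited statement).
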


For convenience, we identify the $2\pi-$periodic measure $\mu$ on the real line $\mathbb{R}$ with the measure $\mu|_{[-\pi, \pi)}$ on the unit circle $\mathbb{T}$. In the case of a $2\pi-$periodic spectral measure, the Hamiltonian $H$ is a step function of step size $1/2$, and the value of the $n-$th step has the following connection to values of orthogonal polynomials on the unit circle. It is worth pointing out that while we only consider even spectral measures in this note, the following result does not assume that the spectral measure is even.

\begin{theorem}[\cite{MP}]\label{onpoly}
Let $\mu$ be a $2\pi-$periodic PW-measure, and $\{\varphi_n\}_{n = 0}^\infty$ be the family of polynomials orthonormal in $L^2_{\frac{\mu}{2\pi}}(\mathbb{T})$. We have \begin{equation*}
    h_{11}(t) |_{(\frac{n}{2}, \frac{n + 1}{2}]} = |\varphi_n(1)|^2.
\end{equation*}
\end{theorem}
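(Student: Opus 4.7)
The plan is to express the integral $\int_0^{n/2} h_{11}(s)\,ds$ as (up to a factor of $\pi$) the squared reproducing-kernel norm at $0$ in $B(E_{n/2})$, then identify this quantity with the OPUC Christoffel--Darboux kernel at $1$ on the polynomial subspace $\PP_{n-1}\subset L^2(\T,\mu/(2\pi))$, and finally take a discrete derivative.

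For the first step, let $X(t,z)=(u(t,z),v(t,z))^T$ be the Neumann solution of \eqref{CS}, so that $E(t,z)=u(t,z)-iv(t,z)$ with $A(t,z)=u(t,z)$, $C(t,z)=v(t,z)$, and initial conditions $u(t,0)=1$, $v(t,0)=0$. Integrating $\dot v(s,z)=z(h_{11}(s)u(s,z)+h_{12}(s)v(s,z))$ and differentiating in $z$ at $z=0$ yields $\partial_z v(t,0)=\int_0^t h_{11}(s)\,ds$. The reproducing-kernel formula collapses at $\lambda=0$ to $K_0^{B(E_t)}(z)=v(t,z)/(\pi z)$, so by the reproducing property and the isometric embedding $B(E_t)\hookrightarrow L^2(\mu)$,
\[
\|K_0^{B(E_t)}\|_{L^2(\mu)}^2 \;=\; K_0^{B(E_t)}(0) \;=\; \frac{1}{\pi}\int_0^t h_{11}(s)\,ds.
\]

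For the second step, specialize to $t=n/2$, where $B(E_{n/2})=PW_{n/2}$ as sets and $\mu$ is a $2\pi$-periodic sampling measure. The goal is to show
\[
\|K_0^{B(E_{n/2})}\|_{L^2(\mu)}^2 \;=\; \frac{1}{2\pi}\sum_{k=0}^{n-1}|\varphi_k(1)|^2,
\]
where the right-hand side is the value at $z=w=1$ of the Christoffel--Darboux kernel $\KK_{n-1}^{\mathrm{OPUC}}(z,w)=\sum_{k=0}^{n-1}\overline{\varphi_k(w)}\varphi_k(z)$ in $L^2(\T,\mu/(2\pi))$. Both sides are dual norms of a point-evaluation functional --- at $x=0$ on $PW_{n/2}\subset L^2(\mu)$ on the left, and at $z=1$ on $\PP_{n-1}\subset L^2(\T,\mu/(2\pi))$ on the right. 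Under $x\mapsto e^{ix}$, together with the identity $\int_{\R}|f|^2\,d\mu=\int_{[-\pi,\pi)}\sum_k|f(x+2\pi k)|^2\,d\mu(x)$ coming from $2\pi$-periodicity of $\mu$, and Poisson summation applied to $|f|^2$ (which is band-limited of type $\le n$ when $f\in PW_{n/2}$), these two extremal problems produce the same value; the factor $1/(2\pi)$ tracks the normalization $\mu/(2\pi)$ on $\T$.

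Subtracting the two preceding displays at consecutive half-integer times and using that $h_{11}$ is constant on $(n/2,(n+1)/2]$ gives
\[
\tfrac{1}{2}\,h_{11}\big|_{(n/2,(n+1)/2]} \;=\; \pi\bigl(\|K_0^{B(E_{(n+1)/2})}\|^2-\|K_0^{B(E_{n/2})}\|^2\bigr) \;=\; \tfrac{1}{2}|\varphi_n(1)|^2,
\]
which is the claim. The main obstacle is the identification in the second step: $B(E_{n/2})$ is infinite-dimensional whenever $\mu$ has locally infinite support, while $\PP_{n-1}$ is only $n$-dimensional, so the correspondence is not a bijection of spaces but only an isometry between the norms of the point-evaluation functionals. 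Justifying the Poisson-summation step with respect to $\mu$ (rather than Lebesgue) and handling the parity of $n$ (which controls whether Nyquist sample points sit at integer or half-integer multiples of $2\pi$) are the technical sticking points. An alternative, probably cleaner, route is to compare the step-by-step canonical-system transfer matrix $M((n+1)/2,z)=T_n(z)M(n/2,z)$, whose factors are explicitly computable because $(\Omega H_n)^2=-I$ on each step, with the Szeg\H{o} recursion matrix of the OPUC for $\mu/(2\pi)$, and match coefficients via the shared spectral measure.
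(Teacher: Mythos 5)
The paper states Theorem~\ref{onpoly} with a citation to \cite{MP} and does not reprove it, so there is no in-paper proof to compare against; I assess your argument on its own terms.

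Your Step~1 is correct and clean. With Neumann data $u(0,z)=1$, $v(0,z)=0$, integrating $\dot v = z(h_{11}u + h_{12}v)$ gives $\partial_z v(t,0)=\int_0^t h_{11}$, and since $A(t,0)=1$, $C(t,0)=0$, the reproducing-kernel formula collapses to $K_0^{B(E_t)}(z)=v(t,z)/(\pi z)$; together with the isometric embedding $B(E_t)\hookrightarrow L^2(\mu)$ this gives $\|K_0^{B(E_t)}\|_{L^2(\mu)}^2 = K_0^{B(E_t)}(0)=\frac1\pi\int_0^t h_{11}$. Step~3 (the discrete derivative at half-integer times) is immediate once Step~2 holds.

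The genuine gap is in Step~2. You correctly identify both sides as squared norms of evaluation functionals, and you correctly flag the dimension mismatch, but the assertion that ``these two extremal problems produce the same value'' is not argued, and the obstacles you list are not the real ones. The periodization identity $\int_{\R}|f|^2\,d\mu = \int_{[-\pi,\pi)}\sum_k|f(x+2\pi k)|^2\,d\mu(x)$ is elementary for any $2\pi$-periodic Poisson-finite $\mu$, and there is no parity issue: for $f\in PW_{n/2}$, $\widehat{|f|^2}$ is a continuous function supported in $[-n,n]$ vanishing at the endpoints, so $\sum_k|f(x+2\pi k)|^2$ is a nonnegative trigonometric polynomial of degree at most $n-1$ for every integer $n$. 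What is actually missing is a mechanism that converts the infinite-dimensional extremal problem over $PW_{n/2}$ into the $n$-dimensional one over $\mathcal{P}_{n-1}$ with the right constant. One way to close it: write $\hat f=\phi\in L^2[-n/2,n/2]$ and set $\Psi(s)=(\phi(s+j-n/2))_{j=0}^{n-1}\in\C^n$ for $s\in[0,1)$. Then the periodization identity becomes
\[
\|f\|_{L^2(\mu)}^2 = \frac1{2\pi}\int_0^1 \Psi(s)^* J_{n-1}\Psi(s)\,ds, \qquad f(0)=\frac1{2\pi}\int_0^1 \langle \mathbb{1},\Psi(s)\rangle\,ds,
\]
with $J_{n-1}$ the Toeplitz moment matrix from \eqref{TruncatedToeplitz} and $\mathbb{1}=(1,\dots,1)$. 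Jensen's inequality for the convex form $a\mapsto a^*J_{n-1}a$ shows the ratio $|f(0)|^2/\|f\|_{L^2(\mu)}^2$ is maximized by piecewise-constant $\phi$; for such $\phi$ one computes $f(x)=\frac1{2\pi}\frac{e^{ix}-1}{ix}e^{-inx/2}p(e^{ix})$ with $p(z)=\sum_{j=0}^{n-1}a_jz^j$, which vanishes on $2\pi\mathbb{Z}\setminus\{0\}$, has $f(0)=p(1)/(2\pi)$, and satisfies $\|f\|_{L^2(\mu)}^2=\frac1{2\pi}\|p\|^2_{L^2(\T,\mu/2\pi)}$. This yields both inequalities in your Step~2 display and hence $\|K_0^{B(E_{n/2})}\|^2_{L^2(\mu)}=\frac1{2\pi}\langle J_{n-1}^{-1}\mathbb{1},\mathbb{1}\rangle=\frac1{2\pi}\sum_{k=0}^{n-1}|\varphi_k(1)|^2$. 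Your proposed alternative, matching the explicit one-step transfer matrix of the canonical system on each interval $(n/2,(n+1)/2]$ against the Szeg\"o recursion matrix, is a genuinely different and probably shorter route, but as written it is only a pointer, not an argument.
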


\begin{remark}
If $\mu$ is a $2T-$periodic measure, we have \begin{equation*}
    h_{11}(t) |_{(n\frac{\pi}{2T}, (n + 1)\frac{\pi}{2T}]} = |\varphi_n(1)|^2,
\end{equation*} and the orthonormal polynomials on the unit circle $\mathbb{T}$ are defined using $d \mu^\ast (x) = d\mu\left( \frac{T}{\pi} x \right) $.
\end{remark}

By the results above, obtaining the spectral measure $\mu$ from a diagonal step-function Hamiltonian $H$ is equivalent to recovering the measure on the unit circle $\mathbb{T}$ from the sequence $ \left\{ \abs{\varphi_n(1)}^2 \right\}$. In other words, we will prove the following result to solve the direct spectral problem.

\begin{theorem} \label{Unique}
Let $\{h_{11}^n\}_{n = 0}^\infty$ be an arbitrary sequence of positive real numbers, there exists a unique even measure $\mu$ on the unit circle $\mathbb{T}$ such that \begin{equation*}
    \abs{\varphi_n(1)}^2 = h_{11}^n,
\end{equation*}
where $\{ \varphi_n \}$ is the family of polynomials orthonormal in $L^2_{\frac{\mu}{2\pi}}(\mathbb{T})$.
\end{theorem}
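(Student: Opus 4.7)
The plan is to reduce the statement to Verblunsky's theorem for orthogonal polynomials on the unit circle (OPUC), combined with the Szeg\H{o} recurrence evaluated at $z=1$. Recall that Verblunsky's theorem furnishes a bijection between nontrivial probability measures on $\T$ (those with infinite support) and sequences $\{\alpha_n\}_{n=0}^\infty \subset \D$ of Verblunsky coefficients. Moreover, such a measure is even, i.e., invariant under $z\mapsto \bar z$, if and only if every $\alpha_n$ is real; this follows by an easy induction from the Szeg\H{o} recurrence $\Phi_{n+1}(z) = z\Phi_n(z) - \bar\alpha_n \Phi_n^*(z)$ for the monic orthogonal polynomials $\Phi_n$, since reality of the coefficients of $\Phi_n$ is preserved precisely when $\alpha_n \in \R$.

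I begin by reading off the total mass. Since $\varphi_0$ is a constant and $\int|\varphi_0|^2\,d\mu/(2\pi)=1$, we get $h_{11}^0 = |\varphi_0(1)|^2 = 2\pi/\mu(\T)$, which uniquely determines $\mu(\T) = 2\pi/h_{11}^0$. After this normalization it suffices to construct the probability measure $d\mu/\mu(\T)$.

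Next I evaluate the Szeg\H{o} recurrence at $z=1$. When each $\alpha_n$ is real, each $\Phi_n$ has real coefficients, so $\Phi_n^*(1) = \overline{\Phi_n(1)} = \Phi_n(1)$. The recurrence then yields $\Phi_{n+1}(1) = (1-\alpha_n)\Phi_n(1)$, and after dividing by the OPUC norm $\|\Phi_n\|^2 = \prod_{j=0}^{n-1}(1-\alpha_j^2)$ we obtain the scalar identity
\begin{equation*}
|\varphi_{n+1}(1)|^2 = \frac{1-\alpha_n}{1+\alpha_n}\,|\varphi_n(1)|^2.
\end{equation*}
Substituting $|\varphi_n(1)|^2 = h_{11}^n$ and solving gives
\begin{equation*}
\alpha_n = \frac{h_{11}^n - h_{11}^{n+1}}{h_{11}^n + h_{11}^{n+1}} \in (-1,1),
\end{equation*}
which is well defined because every $h_{11}^n > 0$.

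Existence and uniqueness of $\mu$ now follow from Verblunsky's theorem applied in the reverse direction: there is a unique nontrivial probability measure on $\T$ with these real Verblunsky coefficients, and rescaling it by $\mu(\T) = 2\pi/h_{11}^0$ produces the required even measure. Uniqueness is automatic, since any candidate $\mu$ must have mass $2\pi/h_{11}^0$ and Verblunsky coefficients forced by the displayed formula. The main point requiring care is keeping the normalization conventions consistent---probability measure versus $d\mu/(2\pi)$, and the sign convention $\alpha_n = -\overline{\Phi_{n+1}(0)}$ in Szeg\H{o}'s recurrence---but once these are fixed, the whole argument reduces to the one-line recursion above together with an appeal to Verblunsky.
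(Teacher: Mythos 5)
Your proposal is correct and takes essentially the same route as the paper's first proof (Section~\ref{alpha}): evaluate the Szeg\H{o} recurrence at $z=1$ for an even measure, obtain the ratio identity $|\varphi_{n+1}(1)|^2/|\varphi_n(1)|^2 = (1-\alpha_n)/(1+\alpha_n)$, solve for each $\alpha_n\in(-1,1)$, and invoke Verblunsky's theorem; your formula $\alpha_n = (h_{11}^n - h_{11}^{n+1})/(h_{11}^n + h_{11}^{n+1})$ is algebraically identical to the paper's. The only cosmetic difference is that you pass through the monic recurrence and divide by $\|\Phi_n\|$, whereas the paper uses the orthonormal recurrence \eqref{SzegoON} directly; you also make explicit the converse direction (real Verblunsky coefficients $\Leftrightarrow$ even measure), which the paper uses implicitly. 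The paper additionally provides a second, independent proof by recovering the moments directly (Section~\ref{Moments}), which your note does not touch.
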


We will provide two proofs for this theorem: one by recovering the Verblunsky coefficients in section \ref{alpha}, and the other by recovering the moments in section \ref{Moments}. Both proofs are algebraic relying on known formulas for orthogonal polynomials on the unit circle.

\section{Recovering the Verblunsky coefficients}\label{alpha}
\subsection{Verblunsky coefficients and Szeg\"o recurrence}
This subsection is primarily based on chapter 1 in \cite{Simon} and references therein.

Let $P$ be a polynomial of degree $n$ on the unit circle $\mathbb{T}$, its \textit{reverse polynomial} is defined by $$P^\ast (z) = z^n \overline{P(1/\overline{z})}.$$

Let $\mu$ be a nontrivial probability measure on $\mathbb{T}$ , $\Phi_n(z)$ be the family of monic orthogonal polynomials on $L^2_{\frac{\mu}{2\pi}}(\mathbb{T})$, and $\Phi_n^\ast(z)$ be the reverse polynomials. $\Phi_n$ and $\Phi_n^\ast$ satisfy the following properties: \begin{enumerate}
\item $\left( \Phi_n^\ast \right)^\ast = \Phi_n$;
\item Up to a unique constant, $\Phi_n^\ast$ is the unique polynomial of degree (at most) $n$ that is orthogonal to $z,z^2, \ldots, z^n$.
\item $\norm{\Phi_n} = \norm{\Phi^\ast_n}$.
\end{enumerate}

We can define a sequence of numbers $\{ \alpha_n \}_{n = 0}^\infty \subseteq \mathbb{D}$ such that  
\begin{equation}\label{Szego}
\begin{split}
    \Phi_{n + 1}(z) &= z \Phi_n(z) - \overline{\alpha_n} \Phi^\ast_n(z), \\
    \Phi_{n + 1}^\ast(z) &= \Phi^\ast_n(z) - \alpha_n z \Phi_n(z). 
\end{split}    
\end{equation}
Moreover, \begin{equation*}
    \norm{\Phi_{n + 1}}^2 = \left( 1 - \abs{\alpha_n}^2 \right) \norm{\Phi_n}^2 = \prod_{j = 0}^n \left( 1 - \abs{\alpha_j}^2 \right).
\end{equation*}

The relations \eqref{Szego} are called the \textit{Szeg\"o recurrence} or \textit{Szeg\"o difference equations}, and $\alpha_n$'s are called the \textit{Verblunsky coefficients}. The Szeg\"o recurrence can also be rewritten for the orthonormal polynomials $\varphi_n$:
\begin{equation}\label{SzegoON}
    \varphi_{n + 1}(z) = \left( 1 - \abs{\alpha_n}^2 \right)^{-1/2} \left( z \varphi_n(z) - \overline{\alpha_n}\varphi_n^\ast(z) \right).
\end{equation}

In the second part of this section, we will solve for $\alpha_n$'s using $\abs{\varphi_n(1)}^2$'s. These $\alpha_n$'s uniquely identify a non-trivial even probability measure on $\mathbb{T}$ because of the Verblunsky's theorem.

\begin{theorem}[Verblunsky's Theorem]

Let $\{\alpha_j\}_{j = 0}^\infty$ be a sequence of numbers in $\mathbb{D}$. Then there is a unique probability measure $d\mu$ with $\alpha_j(d\mu) = \alpha_j$.
    
\end{theorem}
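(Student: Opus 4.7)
The plan is existence via Bernstein–Szegő approximants, and uniqueness by recursive determination of moments from the Verblunsky coefficients.

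For existence, given $\{\alpha_j\}_{j=0}^\infty\subset\D$, for each $N\geq 1$ I would truncate to $\alpha_0,\ldots,\alpha_{N-1},0,0,\ldots$ and run the Szegő recurrence \eqref{Szego} starting from $\Phi_0=\Phi_0^\ast=1$ to produce monic polynomials $\Phi_n^{(N)}$. Setting $\alpha_n=0$ for $n\geq N$ collapses the recurrence to $\Phi_{n+1}^{(N)}=z\Phi_n^{(N)}$, so $\Phi_N^{(N)}$ governs the entire tail. Because each $|\alpha_j|<1$, the zeros of $\Phi_N^{(N)}$ lie strictly inside $\D$, so I can define the Bernstein–Szegő measure
\begin{equation*}
d\mu_N(\theta)=\frac{\prod_{j=0}^{N-1}(1-|\alpha_j|^2)}{|\Phi_N^{(N)}(e^{i\theta})|^2}\,\frac{d\theta}{2\pi}.
\end{equation*}
A direct Fourier/residue calculation shows $\mu_N$ is a probability measure and that $\Phi_0^{(N)},\ldots,\Phi_N^{(N)}$ are its first $N+1$ monic orthogonal polynomials; by the Szegő recurrence, its Verblunsky coefficients are precisely the truncated sequence.

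Since $\{\mu_N\}$ is a family of probability measures on the compact circle $\T$, Banach–Alaoglu yields a weakly convergent subsequence $\mu_{N_k}\rightharpoonup\mu$ with $\mu$ again a probability measure. To show $\alpha_n(\mu)=\alpha_n$ for every $n$, I use that weak convergence on $\T$ is equivalent to convergence of all Fourier moments $c_m=\int z^m\,d\mu$. For $N_k>n$ the Toeplitz determinant $D_n(\mu_{N_k})$ equals the explicit product $\prod_{j=0}^{n-1}(1-|\alpha_j|^2)^{n-j}>0$, a positive quantity independent of $k$. Hence $D_n(\mu)>0$ as well, so $\mu$ has infinite support and all Verblunsky coefficients are defined. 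Since monic orthogonal polynomials and Verblunsky coefficients depend continuously on the moments as long as the Toeplitz determinants stay nonzero, I combine continuity with the exact identity $\alpha_n(\mu_{N_k})=\alpha_n$ (valid for $N_k>n$) to conclude $\alpha_n(\mu)=\alpha_n$.

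For uniqueness, suppose $\mu$ and $\nu$ are probability measures with $\alpha_j(\mu)=\alpha_j(\nu)=\alpha_j$ for all $j$. Both monic-orthogonal-polynomial families satisfy the same Szegő recurrence \eqref{Szego} from the same initial data $\Phi_0=1$, so they coincide; call the common polynomials $\Phi_n(z)=z^n+b_{n-1,n}z^{n-1}+\cdots+b_{0,n}$. The orthogonality $\int\Phi_n\,d\mu=0$ then yields
\begin{equation*}
c_n(\mu)=-\sum_{j=0}^{n-1}b_{j,n}\,c_j(\mu),
\end{equation*}
and the same identity holds for $\nu$. Since $c_0(\mu)=c_0(\nu)=1$, induction gives $c_n(\mu)=c_n(\nu)$ for all $n\geq 0$, with negative indices handled by conjugation. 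Density of trigonometric polynomials in $C(\T)$ then forces $\mu=\nu$.

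The principal technical obstacle is ensuring the weak limit $\mu$ is itself nontrivial enough for all its Verblunsky coefficients to be defined; in principle a subsequence of probability measures can concentrate onto finitely many points and lose the orthogonal polynomial structure. The explicit product formula for the Toeplitz determinants along the Bernstein–Szegő sequence supplies uniform positive lower bounds that survive the limit, which is exactly what rules out such collapse.
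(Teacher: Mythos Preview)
The paper does not prove Verblunsky's Theorem; it merely states it as a classical result, attributing the background material to \cite{Simon}. So there is no ``paper's own proof'' to compare against.

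Your argument is essentially the standard proof (the one in Simon's book): existence via Bernstein--Szeg\H{o} approximants and weak compactness, uniqueness via the recursion for moments determined by the common monic polynomials. The steps are correct, including the key observation that the Toeplitz determinants $D_n(\mu_{N_k})$ equal the fixed positive product $\prod_{j=0}^{n-1}(1-|\alpha_j|^2)^{n-j}$ along the approximating sequence, which survives the weak limit and guarantees the limit measure is nontrivial. One small point worth tightening: you invoke continuity of the Verblunsky coefficients in the moments ``as long as the Toeplitz determinants stay nonzero''; this is true (the map from $(c_0,\ldots,c_n)$ to $\alpha_{n-1}$ is rational with denominator $\det J_{n-1}$), but it would be cleaner to state explicitly that the lower bound on $D_n$ is uniform in $k$, which is what actually delivers the continuity at the limit point.
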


Since a $2\pi-$periodic spectral measure $\mu$ is not guaranteed to be a probability measure, discussing its Verblunsky coefficients may not always make sense. However, one can always consider the associated probability measure $$\nu := \frac{1}{\int_\mathbb{T} \frac{d\mu(t)}{2\pi}} \mu,$$ and find the Verblunsky coefficients of $\nu$ instead. When referring to the "Verblunsky coefficients" of $\mu$, we mean the Verblunsky coefficients of the probability measure $\nu$.

\subsection{Recovering the Verblunsky coefficients}

\begin{proof}[Proof of theorem \texorpdfstring{\ref{Unique}}{Unique}]

We first assume that $h_{11}^0 = 1$, which is the case when $\mu$ is a probability measure.

Note that when $\mu$ is even, both $\varphi_n(1)$ and $\alpha_n$ are real for every $n$. Since \begin{equation*}
    \varphi_n(z) = z^n \overline{\varphi_n^\ast\left( \frac{1}{\overline{z}} \right)},
\end{equation*}
we have $\varphi_n(1) = \varphi_n^\ast(1)$. Therefore, \eqref{SzegoON} becomes \begin{equation*}
    \varphi_{n + 1}(1) =  \left( 1 - \alpha_n^2 \right)^{-1/2}  \left( \varphi_n(1) - \alpha_n \varphi_n(1) \right),
\end{equation*}

which leads to \begin{equation}\label{SzegoUsed}
    \frac{\left( \varphi_{n + 1}(1) \right)^2}{\left( \varphi_{n}(1) \right)^2} = \frac{1 - \alpha_n}{1 + \alpha_n}.
\end{equation}

The left-hand side of the equation above is $h_{11}^{n + 1}/h_{11}^n$, which is a positive number. The function $f(x) = \frac{1 - x}{1 + x}$ is a bijection between $(-1, 1)$ and $(0, \infty)$. Therefore, we can use $\eqref{SzegoUsed}$ to recover the Verblunsky coefficients, and 
\begin{equation*}
    \alpha_n = \frac{1 - h_{11}^{n + 1}/h_{11}^n}{1 + h_{11}^{n + 1}/h_{11}^n}.
\end{equation*}

If $h_{11}^0 \neq 1$, we can still use the process above to recover a probability measure $\nu$ on the unit circle by computing the Verblunsky coefficients. The spectral measure $\mu$ of the corresponding system is $\cfrac{\nu}{h_{11}^0}$.
\end{proof}

\begin{remark}
Although the Verblunsky coefficients uniquely identify a probability measure on the unit circle, we would like to explicitly recover the measure under our setup of spectral problems. However, reconstructing the measure from these coefficients is not a trivial process. The usual methods involve obtaining the moments of the measure, denoted by $c_n$, either by applying the Verblunsky's formula \begin{equation*}
    c_{n + 1} = \alpha_n \prod_{j = 0}^{n - 1} \left( 1 - \alpha_j^2 \right) + V^{(n)}(\alpha_0, \alpha_1, \ldots, \alpha_{n - 1}),
\end{equation*}
where $V^{(n)}$ is a polynomial with integer coefficients, or by solving for $c_{n + 1}$ using a part of the Heine's formulas \begin{equation*}
    \alpha_n = (-1)^n \frac{\det(K_n)}{\det(J_n)},
\end{equation*}
where $J_n$ and $K_n$ are the truncated Toeplitz matrices (defined in \eqref{TruncatedToeplitz} below) of $\mu$ and $e^{-i\theta}\mu$, respectively. Thus, a natural question is whether it is possible to recover the moments directly from $\{h_{11}^n\}$. The answer is positive, and we provide this alternative algorithm in the next section.
\end{remark}

\section{Recovering the moments}\label{Moments}

\subsection{Moments and orthogonal polynomials}

This subsection is primarily based on chapter 1 in \cite{Simon} and references therein.

For a $2\pi-$periodic measure $\mu$, its $n-$th moment is defined by \begin{equation*}
    c_n = \int_0^{2\pi} e^{- i n x} \frac{d \mu(x)}{2\pi} = a_n + i b_n.
\end{equation*}

Associated with the moments is the infinite Toeplitz matrix \begin{equation*}
    J = \begin{bmatrix}
    c_0 & c_1 & c_2 & c_3 & \ldots \\
    c_{-1} & c_{0} & c_1 & c_2 & \ldots \\
    c_{-2} & c_{-1} & c_0 & c_1 & \ldots \\
    c_{-3} & c_{-2} & c_{-1} & c_0 & \ldots \\
    \vdots & \vdots & \vdots & \vdots & \ddots
    \end{bmatrix}.
\end{equation*}
We will denote by $J_n$ the $n \times n$ submatrix on the upper left corner of $J$. In other words \begin{equation}\label{TruncatedToeplitz}
    J_n = \begin{bmatrix}
    c_0 & c_1 & c_2 & \ldots & c_n \\
    c_{-1} & c_{0} & c_1 & \ldots & c_{n - 1} \\
    c_{-2} & c_{-1} & c_0 & \ldots & c_{n - 2} \\
    \vdots & \vdots & \vdots & \ddots & \vdots \\
    c_{-n} & c_{-n + 1} & c_{-n + 2} & \ldots & c_0
    \end{bmatrix}.
\end{equation}
Note that our matrix index starts from $0$ instead of $1$.

It is a well-known result (the Carath\'eodory-Toeplitz theorem) that $\mu$ is a finite positive $2\pi-$periodic measure with infinite support if and only if $$\det(J_n) > 0, \quad \forall n.$$

The Heine formulas give explicit relations between the moments and the monic orthogonal polynomials:\begin{equation*}
\begin{split}
    &\Phi_n(z) = \frac{1}{\det(J_{n - 1})}  \begin{vmatrix}
    c_0 & c_{-1} & c_{-2} & \ldots & c_{-     n} \\
    c_{1} & c_0 & c_{-1} & \ldots & c_{- n + 1}\\
    c_{2} & c_{1} & c_0 & \ldots & c_{- n + 2} \\
    \vdots & \vdots & \vdots & \ddots & \vdots \\
    1 & z & z^2 & \ldots & z^n
    \end{vmatrix},\\
    &\|\Phi_n(z)\|^2 = \frac{\det(J_{n})}{\det(J_{n - 1})}
\end{split}    
\end{equation*}

\subsection{Toeplitz determinants}

To prove theorem \ref{Unique} by directly recovering the moments of the measure, we need several algebraic results for Toeplitz determinants. 

First note that under the restriction of an even measure $\mu$, all the $J_n$ matrices are real and symmetric about both diagonals. 

For convenience, we will index vector lengths from $0$ instead of $1$, as we do for matrices. In particular, we consider $J_0 = [c_0]$ as a $0 \times 0$ matrix and $v_1 = (c_1)^T$ as a vector of length $0$.

\begin{lemma}
If $J_n$ and $J_{n - 1}$ are truncated Toeplitz matrices of $J$, then
\begin{equation}\label{detformula1}
    \det (J_n) = \det(J_{n - 1}) \left( c_0 - v_n^T J_{n - 1}^{-1} v_n \right),
\end{equation}
where $v_n = (c_n, c_{n - 1}, \ldots, c_1)^T$.
\end{lemma}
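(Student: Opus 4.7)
The plan is to recognize \eqref{detformula1} as an immediate consequence of the block-matrix (Schur complement) determinant formula, once the correct block structure of $J_n$ is identified. The only real work is unpacking the Toeplitz indexing.

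First I would write the entries of $J_n$ explicitly as $(J_n)_{ij} = c_{j-i}$ for $i,j \in \{0,\ldots,n\}$ and single out the last row and last column. The upper-left $n\times n$ submatrix is, by inspection, precisely $J_{n-1}$. The last column (restricted to rows $0,\ldots,n-1$) has entries $c_n, c_{n-1},\ldots, c_1$, which is exactly $v_n$. For the last row (restricted to columns $0,\ldots,n-1$) the entries are $c_{-n}, c_{-n+1},\ldots, c_{-1}$; using the standing assumption in this section that $\mu$ is even, so that $c_{-k}=c_k\in\mathbb{R}$, this row equals $v_n^T$. The bottom-right entry is $c_0$. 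Hence
\begin{equation*}
J_n \;=\; \begin{pmatrix} J_{n-1} & v_n \\ v_n^T & c_0 \end{pmatrix}.
\end{equation*}

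Next I would invoke the Schur complement identity: for any block matrix of the form $\left(\begin{smallmatrix} A & B \\ C & D\end{smallmatrix}\right)$ with $A$ invertible, one has $\det\left(\begin{smallmatrix} A & B \\ C & D\end{smallmatrix}\right)=\det(A)\det(D-CA^{-1}B)$. Applied with $A=J_{n-1}$, $B=v_n$, $C=v_n^T$, $D=c_0$, and recalling that $D-CA^{-1}B$ is a scalar, this yields \eqref{detformula1} immediately.

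The one point that needs justification is that $J_{n-1}$ is actually invertible so the Schur complement makes sense. This is not really an obstacle: by the Carath\'eodory--Toeplitz theorem recalled just above, $\det(J_k)>0$ for all $k$ whenever $\mu$ is a positive $2\pi$-periodic measure with infinite support, which is the setting of our PW-measures (and in particular ensures $J_{n-1}\succ 0$, hence invertible). I would state this as a one-line remark before applying the block formula, and that is the whole proof; there are no substantial obstacles, only bookkeeping with the Toeplitz indices.
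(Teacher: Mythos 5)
Your proof is correct. The key observation --- that $J_n$ has the block form $\left(\begin{smallmatrix} J_{n-1} & v_n \\ v_n^{T} & c_0 \end{smallmatrix}\right)$ once evenness of $\mu$ is used to identify the last row with $v_n^T$ --- is exactly the same starting point as the paper's proof. Where you diverge is in how the determinant identity is then obtained: you invoke the Schur complement determinant formula $\det\left(\begin{smallmatrix} A & B \\ C & D \end{smallmatrix}\right) = \det(A)\det(D - C A^{-1} B)$ as a known black box, whereas the paper re-derives that identity from scratch in this $1$-by-$(n)$ bordered case, by doing a cofactor expansion of $\det J_n$ along the last column, then expanding each resulting $(n-i,n)$-minor along its last row, and finally matching the resulting double sum against $v_n^T J_{n-1}^{-1} v_n$ written out via the adjugate formula for $J_{n-1}^{-1}$. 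Your route is shorter and more transparent and is really the ``correct'' way to see the statement; the paper's version is more self-contained and does not presuppose the Schur complement identity but costs about a page of index bookkeeping. Your added remark that $J_{n-1}$ is invertible --- justified by Carath\'eodory--Toeplitz positivity of the truncated Toeplitz determinants for a measure with infinite support --- is the right thing to say, and the paper indeed leaves this implicit; both arguments need it, since both manipulate $J_{n-1}^{-1}$.
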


\begin{proof}
We denote by $M_{i, j}^n$ the $(i, j)$-minor of $J_n$.

The matrix $J_n$ can be rewritten as the block matrix $\begin{bmatrix} J_{n - 1} & v_n \\v_n^T & c_0\end{bmatrix}$. We first compute $\det(J_n)$ using a cofactor expansion along the last column, \begin{equation}\label{det1}
    \det(J_n) = c_0 \det(J_{n - 1}) + \sum_{i = 1}^n (-1)^i c_i \det \left(M_{n - i, n}^n \right).
\end{equation} 

Note that all the $M_{n - i, n}^n$'s, $i = 1, 2, \ldots, n$ share the same $(n - 1)$st row (the $n-$th row of $J_n$). We compute $\det(M_{n - i, n}^n)$ by using the cofactor expansion along the last row, \begin{equation}\label{det2}
    \det \left( M_{n - i, n}^n \right) = \sum_{j = 1}^n (-1)^{j + 1} c_j \det \left( M_{n - i, n - j}^{n - 1} \right).
\end{equation}

Combining \eqref{det1} and \eqref{det2}, \begin{equation}\label{det3}
    \det(J_n) = c_0 \det(J_{n - 1}) + \sum_{i = 1}^n \sum_{j = 1}^n (-1)^{i + j + 1} c_i c_j \det \left( M_{n - i, n - j}^{n - 1} \right).
\end{equation}

The inverse of $J_{n - 1}$ is \begin{equation*}
    \frac{1}{\det (J_{n - 1})} \left[ (-1)^{i + j} \det \left( M_{i, j}^{n - 1} \right) \right]^T.
\end{equation*}

We now compute $v_n^T J_{n - 1}^{-1} v_n$: \begin{equation*}
\begin{split}
    v_n^T J_{n - 1}^{-1} v_n &= \frac{1}{\det(J_n)} \begin{pmatrix} c_n & c_{n - 1} & \ldots & c_1 \end{pmatrix} \left[ (-1)^{i + j} \det \left( M_{i, j}^{n - 1} \right) \right]^T \begin{pmatrix} c_n \\ c_{n - 1} \\ \vdots \\ c_1 \end{pmatrix} \\
    &= \frac{1}{\det(J_n)} \sum_{l = 0}^{n - 1} \sum_{k = 0}^{n - 1} (-1)^{l + k} c_{n - l} c_{n - k} \det \left( M^{n - 1}_{l, k} \right).
\end{split} 
\end{equation*}

Reindex the equation above with $i = n - l$ and $j = n - k$, \begin{equation*}
\begin{split}
    v_n^T J_{n - 1}^{-1} v_n &= \frac{1}{\det(J_n)} \sum_{i = n}^1 \sum_{j = n}^1 (-1)^{2n - i - j} c_i c_j \det \left( M_{n - i, n - j}^{n - 1}\right)\\
    &= \frac{1}{\det(J_n)} \sum_{i = 1}^n \sum_{j = 1}^n (-1)^{i + j} c_i c_j \det \left( M_{n - i, n - j}^{n - 1}\right).
\end{split}    
\end{equation*}

\eqref{detformula1} is obtained by combining the equation above with \eqref{det1}.
\end{proof}

By a similar calculation, we obtain the following.
\begin{corollary}
\begin{equation}\label{detformula2}
    \det \begin{pmatrix}
        J_{n - 1} & v_n \\ \mathbb{1} & 1
    \end{pmatrix} = \det (J_{n - 1}) \left( 1 - \mathbb{1} J_{n - 1}^{-1} v_n \right),
\end{equation}
where $v_n = (c_n, c_{n - 1}, \ldots. c_1)^T$, and $\mathbb{1} = (1, 1, \ldots, 1)$ is a vector of length $n - 1$.
\end{corollary}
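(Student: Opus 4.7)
The identity is most cleanly obtained as a direct instance of the Schur complement formula for block determinants. Writing
$$M := \begin{pmatrix} J_{n-1} & v_n \\ \mathbb{1} & 1 \end{pmatrix},$$
and noting that the Carath\'eodory-Toeplitz theorem gives $\det(J_{n-1}) > 0$ so that $J_{n-1}$ is invertible, the block LU factorization
$$M = \begin{pmatrix} I & 0 \\ \mathbb{1} J_{n-1}^{-1} & 1 \end{pmatrix}\begin{pmatrix} J_{n-1} & v_n \\ 0 & 1 - \mathbb{1} J_{n-1}^{-1} v_n \end{pmatrix}$$
immediately yields \eqref{detformula2} upon taking determinants of both factors and multiplying.

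However, since the author writes ``by a similar calculation,'' the intended proof presumably mirrors the double cofactor expansion used for \eqref{detformula1}, so let me outline that route. First I would expand $\det(M)$ along its last column, whose entries are $(c_n, c_{n-1}, \ldots, c_1, 1)^T$; the bottom entry $1$ contributes $\det(J_{n-1})$, while each $c_i$ contributes a signed minor whose last row is $\mathbb{1}$ with one entry deleted. Then I would expand each such minor along that last row. Compared with the analogous step for \eqref{detformula1} — where the last row was $(c_1, c_2, \ldots, c_n)$ with one entry deleted — the only change is that the coefficients picked up from the last row are now $1$ instead of $c_j$. Hence the analogue of \eqref{det3} reads
$$\det(M) = \det(J_{n-1}) + \sum_{i=1}^{n} \sum_{j=1}^{n} (-1)^{i+j+1} c_i \det\bigl(M_{n-i, n-j}^{n-1}\bigr).$$

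Finally I would expand $\mathbb{1} J_{n-1}^{-1} v_n$ using the adjugate representation of $J_{n-1}^{-1}$, exactly as was done for $v_n^T J_{n-1}^{-1} v_n$ in the previous lemma; one simply replaces the left factor $v_n^T$ by $\mathbb{1}$, which drops one of the two $c_i$-factors in the reindexed double sum. The result matches the sum displayed above up to an overall sign, so rearrangement delivers \eqref{detformula2}. The only step requiring care is the sign bookkeeping in the cofactor expansion; since it is literally identical to that of the preceding lemma, no genuinely new difficulty arises, which is consistent with the author's remark that the calculation is ``similar.''
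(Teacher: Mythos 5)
Your proposal is correct, and the Schur complement / block LU route is a genuinely different — and cleaner — argument than what the paper intends. The paper's ``by a similar calculation'' points to repeating the two-stage cofactor expansion of the preceding lemma, which you also correctly reconstruct. The LU factorization
\begin{equation*}
M = \begin{pmatrix} I & 0 \\ \mathbb{1} J_{n-1}^{-1} & 1 \end{pmatrix}\begin{pmatrix} J_{n-1} & v_n \\ 0 & 1 - \mathbb{1} J_{n-1}^{-1} v_n \end{pmatrix}
\end{equation*}
gives \eqref{detformula2} in one line once one knows $\det(J_{n-1}) > 0$ (which you rightly invoke from Carath\'eodory--Toeplitz), and it would in fact prove \eqref{detformula1} just as quickly: the lemma is the symmetric Schur complement identity $\det(J_n) = \det(J_{n-1})\bigl(c_0 - v_n^T J_{n-1}^{-1} v_n\bigr)$. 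What the paper's cofactor computation buys is a self-contained derivation that does not presuppose familiarity with block determinant identities; what your route buys is brevity and the observation that the invertibility of $J_{n-1}$ is the only substantive hypothesis. One small wrinkle in your cofactor outline: replacing the left factor $v_n^T$ by $\mathbb{1}$ drops the factor indexed by $i$ in the reindexed sum, leaving $(-1)^{i+j} c_j \det\bigl(M^{n-1}_{n-i,n-j}\bigr)$, whereas your displayed analogue of \eqref{det3} carries $c_i$. The two agree only because $J_{n-1}$ is symmetric, which forces $\det\bigl(M^{n-1}_{n-i,n-j}\bigr) = \det\bigl(M^{n-1}_{n-j,n-i}\bigr)$ and lets you swap $i \leftrightarrow j$ in the double sum (equivalently, compute $v_n^T J_{n-1}^{-1}\mathbb{1}^T$ instead). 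This is exactly the kind of reliance on the symmetry of $J_{n-1}$ that the paper's own lemma proof also uses implicitly, so it is not a gap, but it is worth making explicit.
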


When $c_0 = 1$, the matrix $J_n$ can also be written in the form \begin{equation*}
J_n = \begin{bmatrix} 1 & u_n^T \\ u_n & J_{n - 1} \end{bmatrix},
\end{equation*}
where $u_n = (c_1, c_2, \ldots, c_n)^T$. In this case, we have the following formula for expressing $J_{n}^{-1}$ in terms of $J_{n-1}^{-1}$ and $u_n$.
\begin{theorem}[\cite{Trench}]
\begin{equation}\label{ToeplitzInverseFormula}
    J_n^{-1} = \begin{bmatrix} \Delta_n^{-1} & & - \Delta_n^{-1} u_n^T J_{n - 1}^{-1} \\[1em] - \Delta_n^{-1} J_{n - 1}^{-1} u_n & & J_{n - 1}^{-1} + \Delta_n^{-1} J_{n - 1}^{-1} u_n u_n^T J_{n - 1}^{-1} \end{bmatrix},
\end{equation}
where $\Delta_n = 1 - u_n^T J_{n - 1}^{-1} u_n$.
\end{theorem}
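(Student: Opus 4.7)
The plan is to recognize \eqref{ToeplitzInverseFormula} as an instance of the standard block-matrix inversion (Schur complement) formula, applied to the decomposition
$$J_n = \begin{pmatrix} 1 & u_n^T \\ u_n & J_{n-1} \end{pmatrix},$$
pivoting on the lower-right block $J_{n-1}$ rather than on the scalar $1$ in the upper-left corner. For a $2 \times 2$ block matrix $M = \begin{pmatrix} A & B \\ C & D \end{pmatrix}$ in which $D$ and the Schur complement $S := A - B D^{-1} C$ are both invertible, one has the closed form
$$M^{-1} = \begin{pmatrix} S^{-1} & -S^{-1} B D^{-1} \\ -D^{-1} C S^{-1} & D^{-1} + D^{-1} C S^{-1} B D^{-1} \end{pmatrix}.$$
With $A = 1$, $B = u_n^T$, $C = u_n$, $D = J_{n-1}$, the Schur complement is the scalar $S = \Delta_n = 1 - u_n^T J_{n-1}^{-1} u_n$, and substituting yields exactly the right-hand side of \eqref{ToeplitzInverseFormula}. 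Because $\Delta_n$ is a scalar, the factors $\Delta_n^{-1}$ commute freely with $J_{n-1}^{-1}$, $u_n$, and $u_n^T$, matching the ordering in the statement.

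First I would justify the two invertibility hypotheses. Invertibility of $J_{n-1}$ follows from the Carath\'eodory-Toeplitz theorem recalled earlier, which guarantees $\det(J_k) > 0$ for every $k$ whenever $\mu$ is a positive measure with infinite support; thus each truncated Toeplitz matrix is strictly positive definite. Non-vanishing of $\Delta_n$ then follows from \eqref{detformula1}: specializing that identity to $c_0 = 1$ and using that $J_{n-1}$ is symmetric about both diagonals (so that reversing the order of the entries of $v_n$ leaves the scalar $v_n^T J_{n-1}^{-1} v_n$ unchanged), one gets $\det(J_n) = \det(J_{n-1}) \cdot \Delta_n$, whence $\Delta_n = \det(J_n)/\det(J_{n-1}) > 0$.

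The final step is a direct verification that the proposed right-hand side, multiplied on the right by $J_n$, yields the $(n+1) \times (n+1)$ identity. This reduces to four block identities: the $(1,1)$ block gives $\Delta_n^{-1}(1 - u_n^T J_{n-1}^{-1} u_n) = 1$ by definition of $\Delta_n$; the $(1,2)$ block gives $\Delta_n^{-1} u_n^T - \Delta_n^{-1} u_n^T J_{n-1}^{-1} J_{n-1} = 0$; the $(2,1)$ block is the transpose of this; and the $(2,2)$ block collapses to $I_n$ after a one-line cancellation using $J_{n-1} J_{n-1}^{-1} = I_n$. Since the proof is a routine application of a textbook identity, there is no substantive obstacle; the only conceptual content is the choice to pivot on $J_{n-1}$, which forces the Schur complement to be a scalar and therefore leaves the formula in the recursive form in $J_{n-1}^{-1}$, $u_n$, and $\Delta_n$ that is needed downstream.
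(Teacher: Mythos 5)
Your proposal is correct and essentially self-contained. The paper itself does not prove this statement; it simply cites Trench's 1964 paper (which derives the formula as part of a recursive inversion algorithm for Toeplitz matrices), so you are supplying a proof where the paper offers only a reference. The Schur-complement route you take -- pivoting on the lower-right block $D = J_{n-1}$ so that the complement $S = A - BD^{-1}C = \Delta_n$ is a scalar -- is the cleanest way to obtain the formula in exactly the recursive form needed later in the paper, and your justification of the two invertibility hypotheses (positive-definiteness of $J_{n-1}$ via Carath\'eodory--Toeplitz, and $\Delta_n = \det(J_n)/\det(J_{n-1}) > 0$ via \eqref{detformula1}) is appropriate.

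One small imprecision in the final verification: for the product $NJ_n$ (your right-hand side times $J_n$), the $(2,1)$ block is \emph{not} literally the transpose of the $(1,2)$ block -- transposition gives $(NJ_n)^T = J_n N$, whose $(2,1)$ block is the transpose of the $(1,2)$ block of $J_n N$, not of $NJ_n$. The $(2,1)$ block of $NJ_n$ does vanish, but one must verify it directly; it equals $-\Delta_n^{-1} J_{n-1}^{-1} u_n + J_{n-1}^{-1} u_n + \Delta_n^{-1} J_{n-1}^{-1} u_n (u_n^T J_{n-1}^{-1} u_n)$, which collapses to $0$ after substituting $u_n^T J_{n-1}^{-1} u_n = 1 - \Delta_n$. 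Alternatively, compute $J_n N$ instead of $NJ_n$, where the symmetry argument you gesture at does apply block-by-block. Either fix is one line; the rest of the argument is sound.
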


\subsection{Recovering the moments}

\begin{proof}[Proof of theorem \ref{Unique}]
We first assume that $c_0 = 1$, which is $h_{11}^0 = \abs{\varphi_0(1)}^2 = 1$. We solve for $c_1$ separately: \begin{equation*}
    h_{11}^1 = \abs{\varphi_1(1)}^2 = \frac{\begin{vmatrix} 1 & c_1 \\ 1 & 1 \end{vmatrix}^2}{\begin{vmatrix} 1 & c_1 \\ c_1 & 1 \end{vmatrix}} = \frac{(1 - c_1)^2}{1 - c_1^2} = \frac{1 - c_1}{1 + c_1},
    \end{equation*}
    has a unique solution $$c_1 = \frac{1 - h_{11}^1}{1 + h_{11}^1}.$$
For $n > 1$, Heine formulas together with \eqref{detformula1} and \eqref{detformula2} yield \begin{equation} \label{QuadraticFraction}
        h_{11}^{n + 1} = \varphi_{n + 1}(1)^2 = \frac{\left( 1 - \mathbb{1} J_{n}^{-1} v_{n + 1} \right)^2}{1 - v_{n + 1}^T J_{n}^{-1} v_{n + 1} },
    \end{equation}
which is a quadratic fraction in $c_{n + 1}$. We now show that the numerator and denominator of this fraction share a common factor, and therefore, there exists a unique $c_{n + 1}$ satisfying \eqref{QuadraticFraction}.

Note that the sums of the $(i - 1)$st and $(n - i)$th rows or columns of $J_{n - 1}^{-1}$ are the same since $J_{n - 1}^{-1}$ is symmetric about both diagonals. Using \eqref{ToeplitzInverseFormula}, \begin{equation*}
    \begin{split}
        1 - \mathbb{1}J_n^{-1} v_{n + 1} &= 1 - \mathbb{1} \begin{bmatrix} \Delta_n^{-1} & & - \Delta_n^{-1} u_n^T J_{n - 1}^{-1} \\[1em] - \Delta_n^{-1} J_{n - 1}^{-1} u_n & & J_{n - 1}^{-1} + \Delta_n^{-1} J_{n - 1}^{-1} u_n u_n^T J_{n - 1}^{-1} \end{bmatrix} \begin{bmatrix} c_{n + 1} \\ v_n \end{bmatrix} \\
        &= \left(-\Delta_n^{-1} c_{n + 1} + 1 + \Delta_n^{-1} u_n^T J_{n - 1}^{-1} v_n \right) \left( 1 - \mathbb{1} J_{n - 1}^{-1} u_n \right).
    \end{split}
    \end{equation*}
    
Since $\left( 1 - \mathbb{1} J_{n - 1}^{-1} u_n \right)$ is a constant with respect to $c_{n + 1}$, it suffices to show that $$\left(-\Delta_n^{-1} c_{n + 1} + 1 + \Delta_n^{-1} u_n^T J_{n - 1}^{-1} v_n \right)$$ is a factor of $ 1 - v_{n + 1}^T J_{n }^{-1} v_{n + 1} $. Again using \eqref{ToeplitzInverseFormula}, \begin{equation*}
\begin{split}
    1 - v_{n + 1}^T J_{n }^{-1} v_{n + 1} &= 1 - \begin{bmatrix} c_{n + 1} & v_n^T \end{bmatrix} \begin{bmatrix} \Delta_n^{-1} & & - \Delta_n^{-1} u_n^T J_{n - 1}^{-1} \\[1em] - \Delta_n^{-1} J_{n - 1}^{-1} u_n & & J_{n - 1}^{-1} + \Delta_n^{-1} J_{n - 1}^{-1} u_n u_n^T J_{n - 1}^{-1} \end{bmatrix} \begin{bmatrix} c_{n + 1} \\ v_n \end{bmatrix} \\
    &= -\Delta_n^{-1} c_{n + 1}^2 + 2 \Delta_n^{-1} u_n^T J_{n - 1}^{-1} v_n c_{n + 1} + 1 - v_n^T J_{n - 1}^{-1} v_n - \Delta_n^{-1} v_n^T J_{n - 1}^{-1} u_n u_n^T J_{n - 1}^{-1} v_n.        
\end{split}
\end{equation*}

If
$$\left(-\Delta_n^{-1} c_{n + 1} + 1 + \Delta_n^{-1} u_n^T J_{n - 1}^{-1} v_n \right)$$
is a linear factor of $1 - v_{n + 1}^T J_{n }^{-1} v_{n + 1}$, then the other linear factor is \begin{equation*}
    c_{n + 1} + \frac{1 - v_n^T J_{n - 1}^{-1} v_n - \Delta_n^{-1} v_n^T J_{n - 1}^{-1} u_n u_n^T J_{n - 1}^{-1} v_n}{1 + \Delta_n^{-1} u_n^T J_{n - 1}^{-1} v_n}.
\end{equation*}

Recall that $\Delta_n = 1 - u_n^T J_{n - 1} u_n = 1 - v_n^T J_{n - 1} v_n$. We simplify the second term in the expression above: \begin{equation*}
\begin{split}
    \frac{1 - v_n^T J_{n - 1}^{-1} v_n - \Delta_n^{-1} v_n^T J_{n - 1}^{-1} u_n u_n^T J_{n - 1}^{-1} v_n}{1 + \Delta_n^{-1} u_n^T J_{n - 1}^{-1} v_n} &= \frac{ \Delta_n - \Delta_n^{-1} v_n^T J_{n - 1}^{-1} u_n u_n^T J_{n - 1}^{-1} v_n}{1 + \Delta_n^{-1} u_n^T J_{n - 1}^{-1} v_n}\\
    &= \Delta_n \frac{1 - \Delta_n^{-2} (v_n^T J_{n - 1} u_n)^2}{1 + \Delta_n^{-1} u_n^T J_{n - 1}^{-1} v_n} = \Delta_n \left( 1 - \Delta_n^{-1} v_n^T J_{n - 1}^{-1} u_n \right).
\end{split}
\end{equation*}

We now have that \eqref{QuadraticFraction} is actually a linear fraction since \begin{equation*}
    \left(-\Delta_n^{-1} c_{n + 1} + 1 + \Delta_n^{-1} u_n^T J_{n - 1}^{-1} v_n \right) \left(c_{n + 1}  + \Delta_n - v_n^T J_{n - 1}^{-1} u_n \right) = 1 - v_{n + 1}^T J_{n }^{-1} v_{n + 1}.
\end{equation*}   

Finally, when $h_{11}^0 \neq 1$, we have $$c_0 = \frac{1}{h_{11}^0}.$$ Following the process above, we can solve for $\Tilde{c}_n$ using $\Tilde{h}^n = h_{11}^n/h_{11}^0$, and the desired $c_n$ values are given by $c_n = c_0 \cdot \Tilde{c}_n$.
\end{proof}

\begin{remark}
For $n > 1$, we can use the following formula to solve for $c_{n + 1}$ using the first $n$ moments: \begin{equation}\label{Algorithm}
    c_{n + 1} = \frac{1}{h_{11}^0} \cdot \frac{\left( 1 + D_n/\Delta_n \right) \left( 1 - \mathbb{1}J_{n - 1}^{-1} u_n \right)^2 - (\Delta_n - D_n) h_{11}^{n + 1}}{h_{11}^{n + 1} \Delta_n + \left( 1 - \mathbb{1}J_{n - 1}^{-1} u_n \right)^2/\Delta_n},
\end{equation}
where $\Delta_n = 1 - u_n^T J_{n - 1}^{-1} u_n$ and $D_n = u_n^T J_{n - 1}^{-1} v_n$. This formula is used in the examples in section \ref{examples}.
\end{remark}

\section{Beyond periodic spectral measures}\label{Approx}

Let $H$ be a det-normalized diagonal Hamiltonian of a canonical system. In \cite{MP}, it is shown that if $H$ is a step function of uniform step size, then the corresponding spectral measure $\mu$ is an even periodic PW-measure. Moreover, we can solve this direct spectral problem now by obtaining the moments and/or the Verblunsky coefficients of the measure. 

If $H = \begin{pmatrix} h_{11} & 0 \\ 0 & h_{22}\end{pmatrix}$ is not a step function, we can 'periodize' the Hamiltonian by defining a step-function Hamiltonian $H^T$ as follows: \begin{equation}\label{periodization}
\begin{split}
    h_{11}^{T, n} &= h_{11}^T \big|_{\left[ nT, (n + 1)T \right)]} = \frac{1}{T} \int_{nT}^{(n + 1)T} h_{11}(s) ds,\\
    h_{22}^{T, n} &= h_{22}^T \big|_{\left[ nT, (n + 1)T \right)]} = \frac{1}{h_{11}^{T, n}}.
\end{split}
\end{equation}
The spectral measure corresponding to the canonical system with Hamiltonian $H^T$ is denoted by $\mu_T$.

Given that all $\mu_T$'s are PW-measures, our focus is on applying this approach to PW-systems, since all their de Branges spaces $B(E^T_t)$ are the same as $PW_t$ as sets. Unfortunately, characterizing PW-systems solely through their Hamiltonians is challenging when the entries of $H$ are not step functions. As discussed in \cite{MP}, describing PW-Hamiltonians that are not step functions, even in the diagonal case, is inherently difficult. It is known that not all Hamiltonians with non-vanishing determinants are PW-Hamiltonians. However, PW-Hamiltonians include those arising from real Dirac systems, as shown in several results in \cite{MP, BR}.

Real Dirac systems on the half-line $\mathbb{R}_+$ are of the form \begin{equation}\label{RD}
    \Omega \Dot{X}(t) = z X(t) - Q(t) X(t), \ \ t \in (0, \infty).
\end{equation}
Here, $z\in \mathbb{C}$ is a spectral parameter,
$\Omega = \begin{pmatrix} 0 & 1 \\ -1 & 0 \end{pmatrix}$ is the same symplectic matrix as in canonical systems, and $Q(t) = \begin{pmatrix} 0 & f(t) \\ f(t) & 0 \end{pmatrix}$ is the potential matrix, where $f(t)$ is a real-valued locally integrable function, and $X(t) = X(t, z) =  \begin{pmatrix} u(t, z) \\ v(t, z) \end{pmatrix}$ is the unknown vector-function. These systems, when rewritten as canonical systems, feature det-normalized diagonal Hamiltonians, with
$$h_{11}(t) = \exp \left( \int_0^t f(s) ds \right),$$
where $f$ is the locally integrable function in $Q(t)$. Details on rewriting such a system as a canonical system can be found, for instance, in \cite{MPS, Rom, Zhang}. In the rest of this paper, we will tackle the direct spectral problems for real Dirac systems.

When using the periodization approach on Hamiltonians from real Dirac systems, $H^T$'s will still be in the form of \begin{equation*}
    \begin{pmatrix}
        \exp \left( \int_0^t f^T(s) ds \right) & 0 \\ 0 & \exp \left( - \int_0^t f^T(s) ds \right)
    \end{pmatrix},
\end{equation*}
but with $f^T$ being an evenly spaced discrete measure. By the way these approximations are defined, as $T \to \infty$, we have \begin{equation*}
    \int_0^t \abs{f^T(s)} ds \to \int_0^t \abs{f(s)} ds,
\end{equation*}
as $T \to 0$.

We now revisit the Hermite-Biehler entire functions $E$ and $\Tilde{E}$. They satisfy the following differential equation derived from \eqref{RD}:
\begin{equation}\label{HBDFQ}
    \frac{\partial}{\partial t} E(t, z) = -iz E(t, z) + f(t) E^\# (t, z),
\end{equation}
with initial value conditions $E(0, z) = 1$ and $\Tilde{E}(0, z) = -i$, respectively.

Closely associated with the Hermite-Biehler functions are the scattering functions \begin{equation*}
    \mathcal{E}(t, z) = e^{itz} E(t, z) \; \text{and} \; \Tilde{\mathcal{E}}(t, z) = e^{itz} \Tilde{E} (t, z).
\end{equation*}
The scattering functions satisfy \begin{equation*}
    \frac{\partial}{\partial t} \mathcal{E}(t, z) = iz e^{itz} E(t, z) + e^{itz} \frac{\partial}{\partial t} E(t, z) =  f(t) e^{2itz} \mathcal{E}^\#(t, z),
\end{equation*}
with initial value conditions $\mathcal{E}(0, z) = 1$ and $\Tilde{\mathcal{E}}(0, z) = -i$, respectively.

Restrict the equation above to the real line, we get \begin{equation}\label{ScatterEQN}
    \frac{\partial}{\partial t} \mathcal{E}(t, x) = f(t) e^{2itx} \mathcal{E}^\#(t, x) = f(t) e^{2itx} \overline{\mathcal{E}(t, x)}.
\end{equation}

This implies \begin{equation*}
    \abs{\mathcal{E}(a, x)}, \abs{\Tilde{\mathcal{E}}(a, x)} \leqslant \exp \left( \int_0^a \abs{f(t)} dt \right),
\end{equation*}
which is equivalent to \begin{equation*}
    \abs{E(a, x)}, \abs{\Tilde{E}(a, x)} \leqslant \exp \left( \int_0^a \abs{f(t)} dt \right).
\end{equation*}
Together with $\det(M) = 1$ (see section \ref{CSdB}), this implies that $\abs{E}$ and $\abs{\Tilde{E}}$ are both bounded away from $0$ on $\mathbb{R}$.

Now we are ready to prove the main result in this section.

\begin{theorem}
Let $H$ be the Hamiltonian of a real Dirac system, and let $H^T$'s be the step-function approximations defined in \eqref{periodization}. Denote by $\mu$ the spectral measure of the real Dirac system, and $\mu_T$ the spectral measure of the canonical system with Hamiltonian $H^T$. Then, for $\phi \in PW_a$, $a > 0$, we have $\|\phi\|_{L^2(\mu_T)} \to \|\phi\|_{L^2(\mu)}$ as $T \to 0$.
\end{theorem}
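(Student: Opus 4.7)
The plan is to translate the statement into a convergence of de Branges norms and then combine pointwise convergence $E^T(a,\cdot)\to E(a,\cdot)$ on $\R$ with a uniform-in-$T$ upper and lower bound on $|E^T(a,\cdot)|$ in order to apply dominated convergence. Since the real Dirac system is a PW-system by the results cited from \cite{MP, BR}, and each step-function approximation is a PW-system by Corollary~\ref{PWper} (its spectral measure $\mu_T$ is periodic with locally infinite support), we have $B(E_a)=B(E^T_a)=PW_a$ as sets. Combining the de Branges norm formula with the isometric embedding $B(E_a)\hookrightarrow L^2(\mu)$ and its analogue for $E^T$ gives, for every $\phi\in PW_a$,
\begin{equation*}
    \|\phi\|^2_{L^2(\mu)}=\int_\R\frac{|\phi(x)|^2}{|E(a,x)|^2}\,dx,\qquad
    \|\phi\|^2_{L^2(\mu_T)}=\int_\R\frac{|\phi(x)|^2}{|E^T(a,x)|^2}\,dx,
\end{equation*}
reducing the theorem to showing the second integral converges to the first.

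Pointwise convergence is routine. Since $h_{11}(t)=\exp\int_0^t f$ is continuous, its local averages $h_{11}^T$ converge to $h_{11}$ uniformly on $[0,a]$, and the same is true for $h_{22}^T=1/h_{11}^T$; hence $H^T\to H$ in $L^1([0,a])$. A Gronwall estimate for the matrizant equation $\Omega\dot M(t,z)=zH(t)M(t,z)$ then yields $M^T(a,z)\to M(a,z)$ uniformly on compact subsets of $\C$, and in particular $E^T(a,x)\to E(a,x)$ for every $x\in\R$.

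The main obstacle is the uniform bound on $|E^T(a,x)|$. Viewing $H^T$ formally as arising from a real Dirac system with atomic potential $f^T$, the integral mean value theorem gives $h_{11}^{T,n}=e^{g(\xi_n)}$ for some $\xi_n\in[nT,(n+1)T]$, where $g(t)=\int_0^t f$, so the mass of $f^T$ at $nT$ equals $g(\xi_n)-g(\xi_{n-1})$ (and $g(\xi_0)$ at $t=0$). Since the $\xi_n$ are increasing, telescoping yields
\begin{equation*}
    \|f^T\|_{\mathrm{TV}([0,a])}\leq |g(\xi_0)|+\sum_{n\geq 1:\,nT\leq a}\left|\int_{\xi_{n-1}}^{\xi_n}f(s)\,ds\right|\leq \int_0^{a+T}|f|\leq \int_0^{2a}|f|=:M_a
\end{equation*}
for $T\leq a$. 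The scattering estimate \eqref{ScatterEQN}, extended to atomic potentials --- most cleanly by smoothing $f^T$ to $f^T_\varepsilon\in L^1$ with $\|f^T_\varepsilon\|_{L^1}\to\|f^T\|_{\mathrm{TV}}$, applying $|E^T_\varepsilon(a,x)|\leq\exp(\|f^T_\varepsilon\|_{L^1})$ in the smooth case, and passing to the limit via the Gronwall-type continuous dependence of the previous paragraph --- gives $|E^T(a,x)|\leq e^{M_a}$ uniformly in $x\in\R$ and in $T\leq a$. Applying the same bound to $|\tilde E^T(a,x)|$ and using $\det M^T=1$ together with Cauchy-Schwarz on $\R$ (which gives $|E^T(a,x)|\cdot|\tilde E^T(a,x)|\geq 1$) then yields $|E^T(a,x)|\geq e^{-M_a}$. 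Justifying the smoothing so that the $L^1$ norms of the smoothed potentials converge to $\|f^T\|_{\mathrm{TV}}$ uniformly in $T$ is the technical heart of the argument.

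With these bounds in hand, $|\phi(x)|^2/|E^T(a,x)|^2\leq e^{2M_a}|\phi(x)|^2$, which is integrable on $\R$ and independent of $T$. Dominated convergence together with the pointwise convergence $|E^T(a,x)|^{-2}\to|E(a,x)|^{-2}$ then gives $\|\phi\|^2_{L^2(\mu_T)}\to\|\phi\|^2_{L^2(\mu)}$, completing the proof.
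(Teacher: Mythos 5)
Your proposal is correct, but it takes a genuinely different route from the paper at the crucial convergence step, and in my reading your route is actually safer. Both arguments share the same scaffolding: the de Branges norm formula reduces the problem to comparing the weights $|E(a,x)|^{-2}$ and $|E^T(a,x)|^{-2}$, uniform upper bounds on $|E^T(a,x)|$ come from the total-variation estimate on $f^T$ via the scattering equation, and the lower bound (which the paper invokes implicitly through $\det M = 1$ and you state via $|E^T|\,|\tilde E^T|\geqslant 1$) guarantees the weight is controlled. The divergence is in how convergence of the weights is obtained. The paper reduces to showing $|E^T(a,\cdot)|^{-2}\to|E(a,\cdot)|^{-2}$ \emph{uniformly in $x$} and proves this by a Gronwall estimate on the scattering ODE written with the potentials $f$ and $f^T$, ending with a bound proportional to $\int_0^a|f^T(t)-f(t)|\,dt$. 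Since $f^T$ is an atomic (discrete) measure while $f$ is absolutely continuous, these measures are mutually singular, so the total variation $\int_0^a|f^T-f|$ equals $\int_0^a|f^T|+\int_0^a|f|$ and does \emph{not} tend to zero; the cancellation that would make this work is discarded the moment absolute values are taken inside the Gronwall step. You avoid this entirely: you run Gronwall on the matrizant equation $\Omega\dot M = zHM$ using the clean fact that $H^T\to H$ uniformly (hence in $L^1_{\mathrm{loc}}$), which gives pointwise convergence of $E^T(a,\cdot)$, and then you only need dominated convergence rather than uniform convergence, the dominating function $e^{2M_a}|\phi|^2$ being supplied by the uniform lower bound on $|E^T|$. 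So your argument both fills the gap and weakens the needed mode of convergence. The one place your write-up is heavier than necessary is the smoothing of $f^T$ to justify the upper bound $|E^T(a,x)|\leqslant e^{M_a}$; since $H^T$ is piecewise constant, the transfer matrix $M^T(a,x)$ is an explicit finite product of elementary matrices, and the bound together with the total-variation inequality you derive from the integral mean value theorem can be checked directly on that product without any smoothing.
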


\begin{proof}
First note that \begin{equation*}
\begin{split}
    &\norm{\phi}_{L^2(\mu)}^2 = \int_\mathbb{R} \phi(x) \overline{\phi(x)} \frac{dx}{|E(a, x)^2|}, \\
    &\text{and~~} \norm{\phi}_{L^2(\mu^T)}^2 = \int_\mathbb{R} \phi(x) \overline{\phi(x)} \frac{dx}{|E^T(a, x)^2|}.
\end{split}
\end{equation*}
Then by H\"older's inequality, it is enough to show that \begin{equation*}
    \frac{1}{\abs{E^T(a, x)^2}} \to \frac{1}{\abs{E(a, x)^2}}, \quad \text{uniformly~in~} x \text{~as~} T \to 0.
\end{equation*}
Since $\abs{E(a, x)^2}$ is bounded away from $0$, it suffices to show that \begin{equation*}
    \abs{E^T(a, x)^2}  \to \abs{E(a, x)^2}, \quad \text{uniformly~in~} x \text{~as~} T \to 0,
\end{equation*}
which is equivalent to \begin{equation*}
    \abs{\mathcal{E}^T(a, x)^2} \to \abs{\mathcal{E}(a, x)^2}, \quad \text{uniformly~in~} x \text{~as~} T \to 0.
\end{equation*}

Recall that \begin{equation*}
    \abs{\mathcal{E}(a, x)}^2 \leqslant \exp \left( 2 \int_0^a \abs{f(t)} dt \right).
\end{equation*}
Then we have for $T$ small enough, \begin{equation*}
    \abs{\mathcal{E}^T(a, x)}^2 \leqslant \exp \left( 2 \int_0^a \abs{f^T(t)} dt \right) \leqslant \exp \left( 4 \int_0^a \abs{f(t)} dt\right).
\end{equation*}

Since $\mathcal{E}$ and $\mathcal{E}^T$ satisfy \eqref{ScatterEQN} with $f$ and $f^T$ respectively, we have \begin{equation*}
\begin{split}
    \frac{\partial}{\partial t} \left[ \mathcal{E}(t, x) - \mathcal{E}^T(t, x) \right] &= f(t) e^{2itx} \overline{\mathcal{E}(t, x)} - f^T(t) e^{2itx} \overline{\mathcal{E}^T(t, x)} \\
    &= f(t) e^{2itx} \overline{\left[ \mathcal{E}(t, x) - \mathcal{E}^T(t, x) \right]} - \left[f^T(t) - f(t) \right] e^{2itx} \overline{\mathcal{E}^T(t, x)}, \\
    \mathcal{E}(0, x) - \mathcal{E}^T(0, x) &= 0.
\end{split} 
\end{equation*}

Multiply $\mathcal{E}(t, x) - \mathcal{E}^T(t, x)$ to both sides of the differential equation above, we get \begin{equation*}
\begin{split}
    \frac{1}{2} &\frac{\partial}{\partial t} \left[ \mathcal{E}(t, x) - \mathcal{E}^T(t, x) \right]^2 \\
    &= f(t) e^{2itx} \abs{\left[ \mathcal{E}(t, x) - \mathcal{E}^T(t, x) \right]}^2 - \left[f^T(t) - f(t) \right] e^{2itx} \overline{\mathcal{E}^T(t, x)}\left[ \mathcal{E}(t, x) - \mathcal{E}^T(t, x) \right],
\end{split}
\end{equation*}
which implies \begin{equation*}
    \frac{1}{2} \abs{\frac{\partial}{\partial t} \left[ \mathcal{E}(t, x) - \mathcal{E}^T(t, x) \right]^2} \leqslant \abs{f(t)} \; \abs{\left[ \mathcal{E}(t, x) - \mathcal{E}^T(t, x) \right]^2} + C_a \abs{f^T(t) - f(t)},
\end{equation*}
where $$
C_a = \exp \left( 2 \int_0^a \abs{f(t)} dt\right) \left[\exp \left( 2 \int_0^a \abs{f(t)} dt\right) + \exp \left( \int_0^a \abs{f(t)} dt\right) \right].
$$
Therefore, \begin{equation*}    
\abs{\left[ \mathcal{E}(a, x) - \mathcal{E}^T(a, x) \right]}^2 \leqslant 2 C_a \exp \left( 2 \int_0^a \abs{f(t)} dt \right)  \int_0^a \abs{f^T(t) - f(t)} dt \to 0 \quad \text{as~} T \to 0,
\end{equation*}
and this convergence is uniform in $x$.
\end{proof}

\section{Examples of direct spectral problems with periodic spectral measures}\label{examples}

We first look at step-function Hamiltonians. We start by revisiting an example studied in \cite{MP}.

\begin{example}\label{example1+cos}
Let us solve the direct spectral problem where $h_{11} = \frac{(n + 1)(n + 2)}{2}$ on $(\frac{n}{2}, \frac{n + 1}{2}]$, $h_{22}(t) = \frac{1}{h_{11}(t)}$, and $h_{12}(t) = h_{21}(t) = 0$. From section 7 of \cite{MP}, the spectral measure is $d\mu(x) = \left( 1 - \cos(x) \right) dx$. Therefore, we expect to recover $c_0 = 1$, $c_1 = -\frac{1}{2}$, and $c_n = 0$ for $n \geqslant 2$, or equivalently $\alpha_n = -\frac{1}{n + 2}$.

We can recover the Verblunsky coefficients using section \ref{alpha}.\\

{\underline{$n = 0$.}} \begin{equation*}
    \alpha_0 = \frac{1 - h_{11}^1/h_{11}^0}{1 + h_{11}^1/h_{11}^0} = -\frac{1}{2}.
\end{equation*}

{\underline{$n = 1$.}}\begin{equation*}
   \alpha_1 = \frac{1 - h_{11}^2/h_{11}^1}{1 + h_{11}^2/h_{11}^1} = -\frac{1}{3}.
\end{equation*}

{\underline{$n = 2$.}}\begin{equation*}
    \alpha_2 = \frac{1 - {h_{11}^3}/{h_{11}^2}}{1 + {h_{11}^3}/{h_{11}^2}} = -\frac{1}{4}.
\end{equation*}

{\underline{$n = 3$.}}\begin{equation*}
    \alpha_2 = \frac{1 - {h_{11}^4}/{h_{11}^3}}{1 + {h_{11}^4}/{h_{11}^3}} = -\frac{1}{5}.
\end{equation*}

We can also recover the moments using section \ref{Moments}.\\

{\underline{$n = 0$.}} \begin{equation*}
   c_0 = \frac{1}{h_{11}^0} = 1.
\end{equation*}

{\underline{$n = 1$.}}\begin{equation*}
    c_1 = \frac{1 - h_{11}^1}{1 + h_{11}^1} = \frac{1 - 3}{1 + 3} = \frac{-1}{2}.
\end{equation*}

{\underline{$n = 2$.}}\begin{equation*}
    c_2 = \frac{\left( 1 + D_1/\Delta_1 \right) \left( 1 - \mathbb{1}J_{0}^{-1} u_1 \right)^2 - (\Delta_1 - D_1) h_{11}^{2}}{h_{11}^{2} \Delta_1 + \left( 1 - \mathbb{1}J_{0}^{-1} u_1 \right)^2/\Delta_1} = \frac{\left(1 + \frac{1/4}{3/4} \right) \cdot \left( \frac{3}{2} \right)^2 - \left(\frac{3}{4} - \frac{1}{4} \right) \cdot 6 }{6 \cdot \frac{3}{4} + \left( \frac{3}{2} \right)^2 / \frac{3}{4}} = 0.
\end{equation*}

{\underline{$n = 3$.}}\begin{equation*}
    c_3 = \frac{\left( 1 + D_2/\Delta_2 \right) \left( 1 - \mathbb{1}J_{1}^{-1} u_2 \right)^2 - (\Delta_2 - D_2) h_{11}^{3}}{h_{11}^{3} \Delta_2 + \left( 1 - \mathbb{1}J_{1}^{-1} u_2 \right)^2/\Delta_2} =
    \frac{\left(1 + \frac{1/6}{2/3} \right) \cdot \left( 2 \right)^2 - \left(\frac{2}{3} - \frac{1}{6} \right) \cdot 10 }{10 \cdot \frac{2}{3} + \left( 2 \right)^2/\frac{2}{3}} = 0.
\end{equation*}
\end{example}

\begin{example}\label{GeometricGrowth}
Let us solve the direct spectral problem where $h_{11} = a^n$ on $(\frac{n}{2}, \frac{n + 1}{2}]$ where $a > 0$, $h_{22}(t) = \frac{1}{h_{11}(t)}$, and $h_{12}(t) = h_{21}(t) = 0$.

For this example, we will only recover the Verblunsky coefficients. Since $h_{11}^0 = 1$, the spectral measure is a probability measure on $\mathbb{T}$. Since \begin{equation*}
    \frac{h_{11}^{n + 1}}{h_{11}^n} = a , \quad \text{for~all~}n,
\end{equation*}
the corresponding measure has constant Verblunsky coefficients \begin{equation*}
    \alpha_n = \alpha \equiv \frac{1 - a}{1 + a} \quad \text{for~all~}n.
\end{equation*}
This measure corresponds to the Geronimus polynomials. Details about these polynomials can be found in for instance \cite{Golinskii}. Note that our notation is slightly different, their constant Verblunsky coefficient is our $-\alpha$. The corresponding measure on the circle has an absolutely continuous part \begin{equation*}
    w(x) = \frac{1}{\abs{1 + \alpha}} \frac{\sqrt{1 - \alpha^2 - \cos^2 \frac{x}{2}}}{\sin \frac{x}{2}},
\end{equation*}
supported on $\left[ 2\arcsin{\abs{\alpha}}, 2\pi - 2\arcsin{\abs{\alpha}} \right]$, and a singular part \begin{equation*}
    \frac{2}{\abs{1 + \alpha}^2} \left( \abs{\alpha + \frac{1}{2}}^2 - \frac{1}{4} \right) \delta_{z = 1},
\end{equation*}
if $\alpha > 0$, or equivalently $a < 1$.
\end{example}

For non-step-function Hamiltonians, we again start by revisiting an example studied in \cite{MP}. 

\begin{example}
Let $H$ be the diagonal det-normalized Hamiltonian with $h_{11}(t) = \frac{1}{(1 + \frac{1}{4}t)^2}$. The spectral measure of the corresponding system is $\mu = m + \frac{1}{4} \pi \delta_0$, where $m$ is the Lebesgue measure and $\delta_0$ is the unit point mass at $0$.

For each step-function approximation $h_{11}^T$, we can compute the moments and/or the "Verblunsky coefficients" of the spectral measure $d\mu^T(x) = w^T(x) dx$, where $w^T(x)$ is a cosine series. In the following figures, the yellow curves represent the partial sum of the first $20$ terms of $w^T(x)$, while the blue curves depict the first $20$ terms of the Fourier series of a $\frac{\pi}{T}-$periodization of $\mu$. The $\frac{\pi}{T}-$periodization of $\mu$ is the same as in \cite{PZ}, where $\mu_{\frac{\pi}{T}}(S) = \mu(S)$ for $S \subseteq \left[ -\frac{\pi}{2T}, \frac{\pi}{2T} \right)$, and extended periodically with period ${\frac{\pi}{T}}$.  Each figure below displays three periods of the measures.
\begin{figure}[H]
\centering
\includegraphics[width=0.345\textwidth]{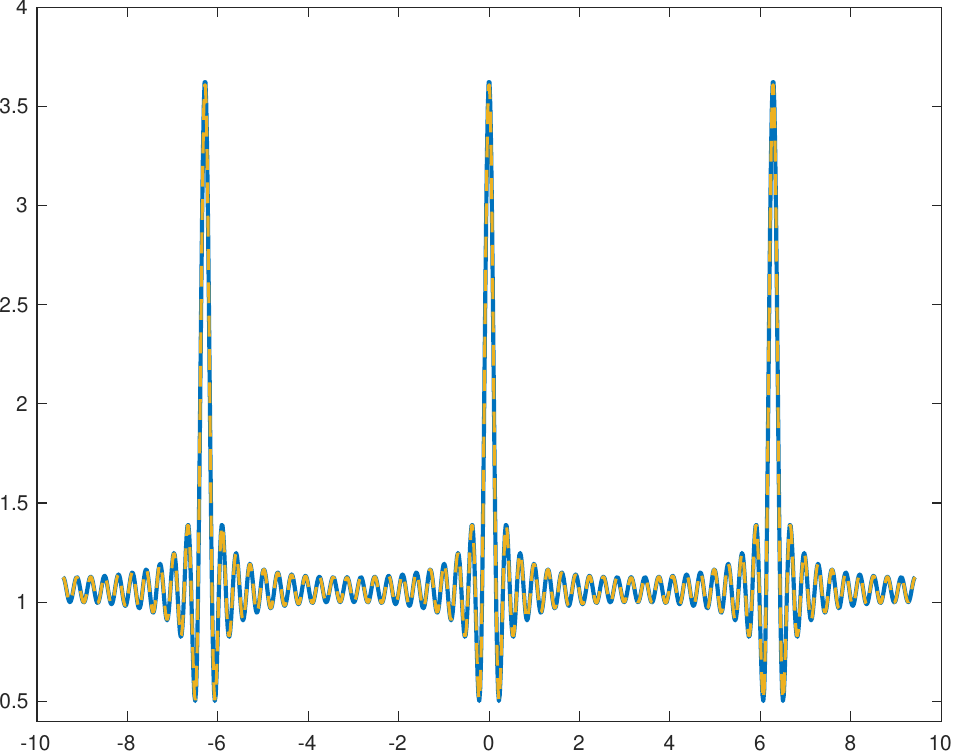}
\includegraphics[width=0.345\textwidth]{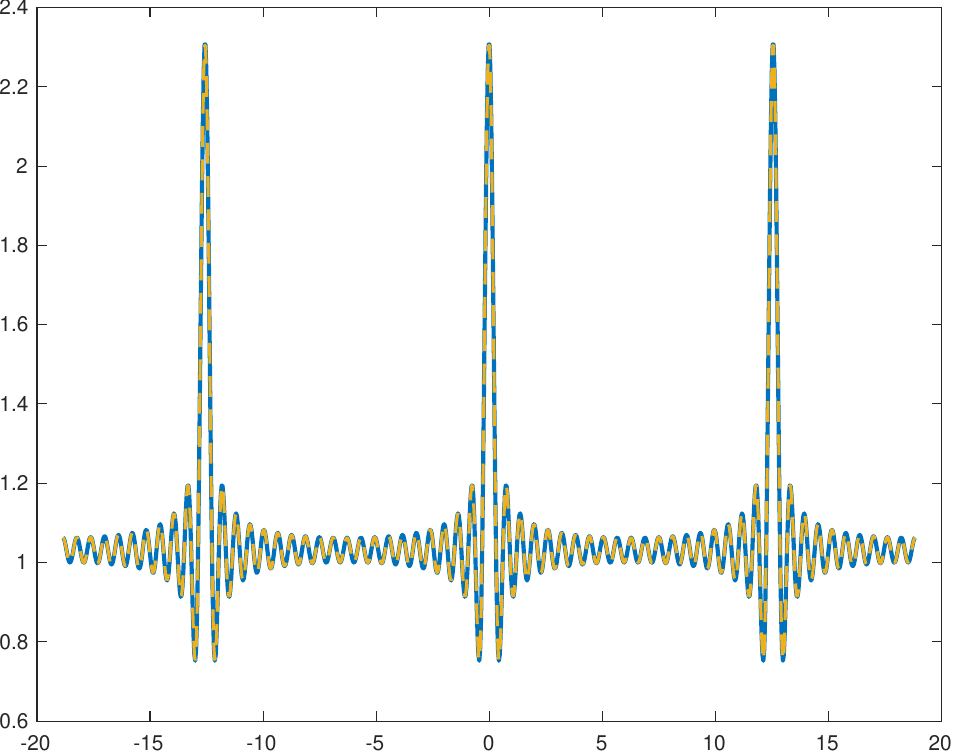}
\includegraphics[width=0.345\textwidth]{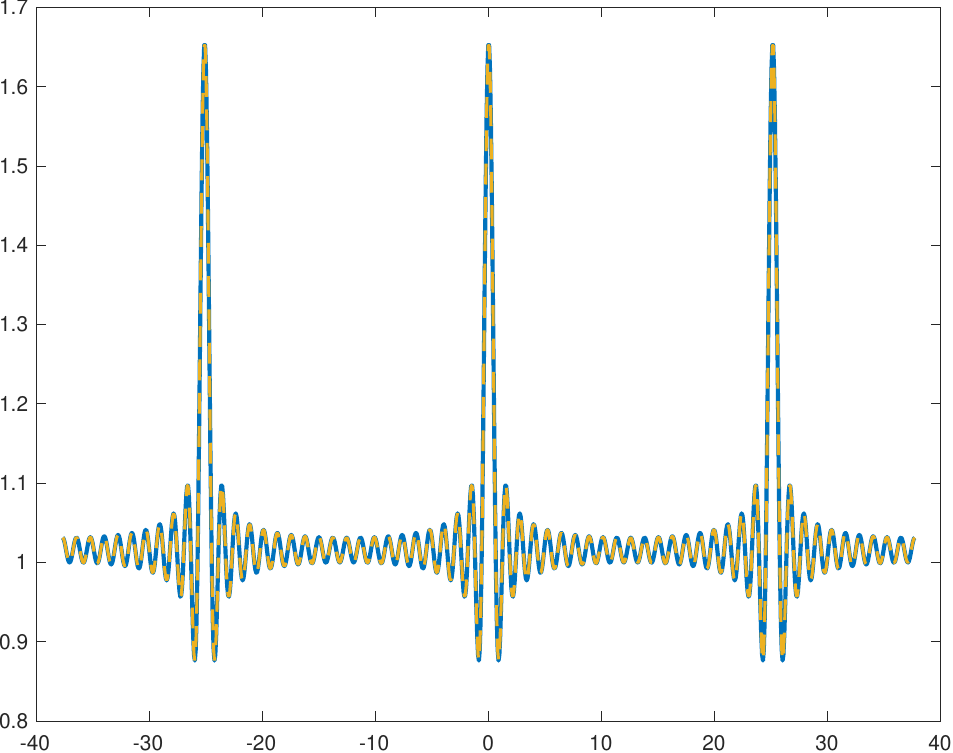}
\includegraphics[width=0.345\textwidth]{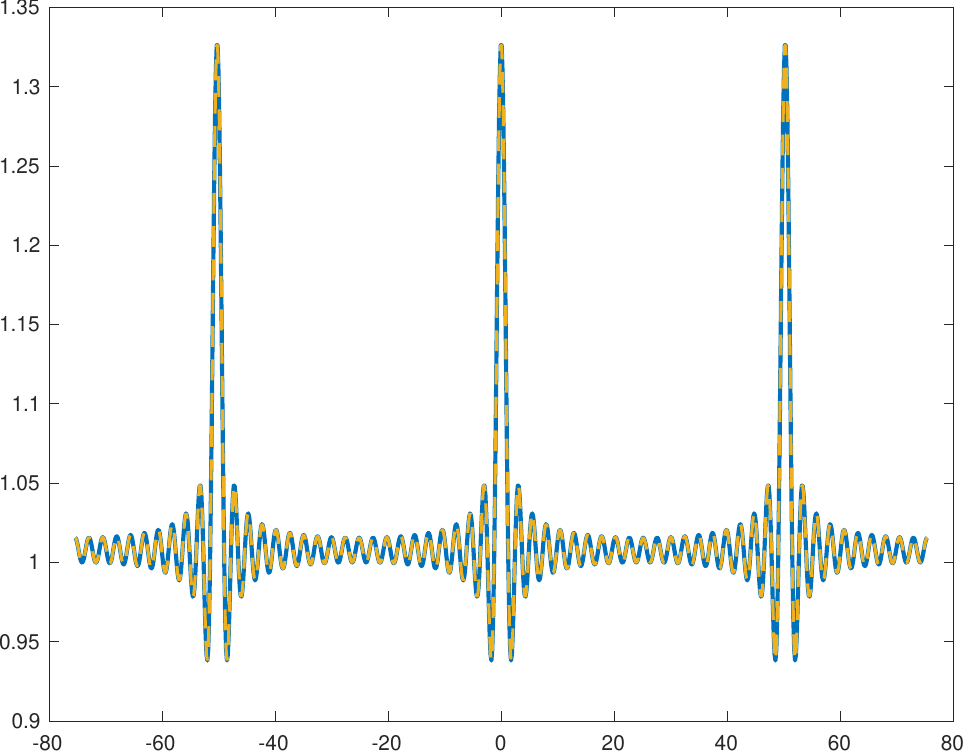}
\caption{First $20$ terms of the cosine series of $\mu^T$ and $\mu$. \\ 
Upper left: $T = \frac{1}{2}$, upper right: $T = \frac{1}{4}$, lower left: $T = \frac{1}{8}$, lower right: $T = \frac{1}{16}$.}
\end{figure}

\end{example}

Now we look at some new examples.

\begin{example}

Let $H$ be the diagonal det-normalized Hamiltonian with $h_{11}(t) = 1 + t$. For each step-function approximation $h_{11}^T$, we can compute the moments of the spectral measure $d\mu^T(x) = w^T(x) dx$, where $w^T(x)$ is a cosine series. In the following figure, the curves represent the partial sums of $w^T(x)$. 

\begin{figure}[H]
    \centering
    \includegraphics[width = .45\textwidth]{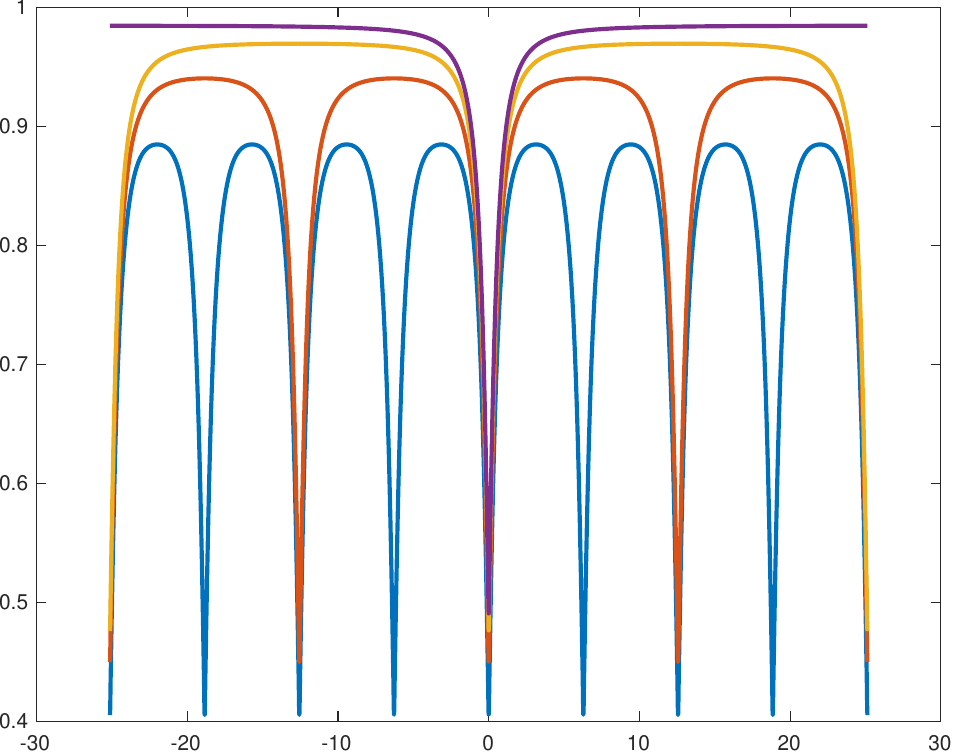}
    \caption{First $20$ terms of the cosine series of $\mu^T$. \\ 
    Blue: $T = \frac{1}{2}$. Orange: $T = \frac{1}{4}$. Yellow: $T = \frac{1}{8}$. Purple: $T = \frac{1}{16}$.}
\end{figure}

\end{example}

\begin{example}\label{ExpGrowth}

Let $H$ be the diagonal det-normalized Hamiltonian with $h_{11}(t) = \exp(t)$. The step-function approximations $h_{11}^T(t)$ satisfy \begin{equation*}
\begin{split}
    \frac{h_{11}^{T, n + 1}}{h_{11}^{T, n}} = \frac{\frac{1}{T} \int_{(n + 1)T}^{(n + 2)T} e^s ds}{\frac{1}{T} \int_{nT}^{(n + 1)T} e^s ds} = e^T,
\end{split}
\end{equation*}
which implies their corresponding spectral measures have constant Verblunsky coefficients.

Using example \ref{GeometricGrowth}, we can solve for the spectral measures $\mu_T$. For each $T$, on the interval $[0, T\pi] \subseteq \mathbb{R}$, the measure $\mu_T$ has an absolutely continuous part \begin{equation*}
    w^T(\theta) = \frac{T\left( e^T + 1 \right)}{2\left( e^T - 1 \right)} \frac{\sqrt{\frac{4*e^T}{(e^T + 1)^2} - \cos^2(T \theta)}}{\sin(T \theta)},
\end{equation*}
supported on $[\frac{1}{T} \arcsin \frac{1 - e^T}{1 + e^T}, \frac{\pi}{T} - \frac{1}{T} \arcsin \frac{1 - e^T}{1 + e^T}]$, and no singular part.

\begin{figure}[H]
    \centering
    \includegraphics[width = .45\textwidth]{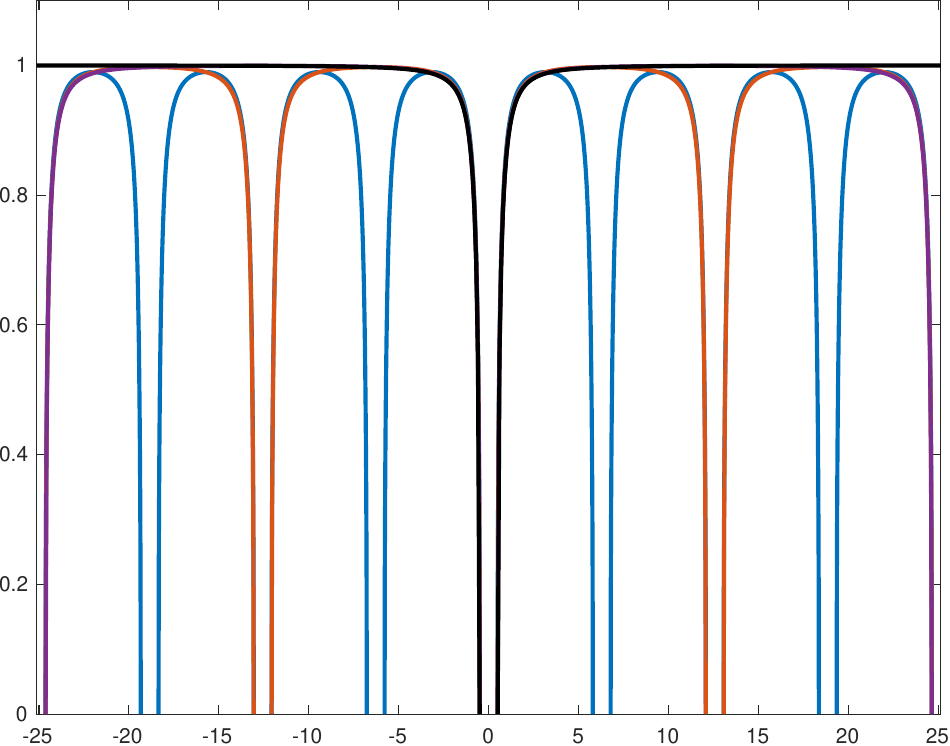}
    \caption{ $w^T(x)$ where $d^T\mu(x) = w^T(x) dx$. \\ 
    Blue: $T = \frac{1}{2}$. Orange: $T = \frac{1}{4}$. Purple: $T = \frac{1}{8}$. Black: $T = \frac{1}{16}$.}
\end{figure}

Since $w^T(x)$ is explicitly known for each $T$, we can determine the limiting measure $\mu$ as well. The limit of $\mu_T$ is given by  
\begin{equation}
    d\mu(x) = \frac{\sqrt{4x^2 - 1}}{2 \abs{x}} dx,
\end{equation}
supported on $(-\infty, -\frac{1}{2}] \cup [\frac{1}{2}, \infty)$.

\end{example}

\newpage


\begin{thebibliography}{99}

\bibitem{Bessonov} {\sc R. Bessonov.}{"Sampling measures, Muckenhoupt Hamiltonians, and triangular factorization",}
{\it International Mathematics Research Notices,} (2018) no. 12, 3744–3768.

\bibitem{BR}{\sc R. Bessonov and R. Romanov.}{"An inverse problem for weighted Paley-Wiener spaces"},    {\it Inverse Problems,} 32 (11), (2016).

\bibitem{Borg1} {\sc G. Borg.} {"Eine Umkehrung der Sturm-Liouvilleschen Eigenwertaufgabe",} {\it Acta Mathematics,} 78 (1946), 1-96.

\bibitem{Borg2} {\sc G. Borg.} {"Uniqueness theorems in the spectral theory of $y'' + (\lambda - q(x) )y = 0$",}  {\it Proc. 11-th Scandinavian Congress of Mathematicians, Johan Grundt Tanums Forlag, Oslo,} (1952), 276-287.

\bibitem{Burnol}{\sc J.-F. Burnol.}{"Two complete and minimal systems associated
		with the zeros of the Riemann zeta function",} {\it Journal de Th\'eorie des Nombres de Bordeaux,} 16.1 (2004), 65-94.
	
\bibitem{dB}  {\sc L. de Branges.} {"Hilbert spaces of entire functions",} Prentice-Hall,
Englewood Cliffs, NJ, 1968.

\bibitem{Denisov} {\sc S. A. Denisov.} {"Continuous analogs of polynomials orthogonal on the unit circle and Krein systems",}
{\it International Mathematics Research Surveys,} Volume 2006 (2006), 426-447.

\bibitem{DM}{\sc H. Dym and H. P. McKean.} {"Gaussian processes, function theory and the inverse spectral problem",} Academic Press, New York,  1976. Courier Corporation, Jan. 2008.

\bibitem{GL} {\sc I. M. Gelfand and B. M. Levitan.} {"On the determination of a differential equation from its spectral function" (Russian),} { \it Izvestiya Akad. Nauk SSSR, Ser. Mat.} 15 (1951), 309-360;
English translation in {\it American Mathematical Society Translation,}  Series 2, Volume 1 (1955), 253-304.

\bibitem{Golinskii} {\sc L. Golinskii.} {"Geronimus polynomials and weak convergence on a circular arc",} {\it Methods of applications of analysis,} 6.4 (1999), 412-436.

\bibitem{Lagarias1} {\sc J.C. Lagarias.} {"Zero Spacing Distributions for Differenced L-Functions",} {\it Acta Arithmetica} 120 (2006), 159-184.

\bibitem{Lagarias2}  {\sc J.C. Lagarias.} {"The Schr\"odinger Operator with Morse Potential on the
		Right Half Line",} {\it Communications in Number Theory and Physics} 3.2(2009), 323-361.

\bibitem{MP}  {\sc N. Makarov and A. Poltoratski.} {"Etudes for the inverse spectral problem",} {\it Journal of the London Mathematical Society} 108.3(2023), 916-977.

\bibitem{MPS}  {\sc N. Makarov, A. Poltoratski, and M. Sodin.} {"Lectures on linear complex analysis",} {\it unpublished lecture notes}.

\bibitem{Mar1} {\sc V. Marchenko.} {"Some questions in the theory of one-dimensional linear differential operators of the second order. I",} {\it Trudy Mosk. Mat. Obsch.} 1 (1952), 327--420.

\bibitem{Mar2}{\sc V. Marchenko.} {"Sturm-Liouville operators and applications",}  Birkh\"auser Basel, 1986.

\bibitem{Scatter} {\sc A. Poltoratski.} {"Pointwise convergence of  the non-linear Fourier transform",} {\it Annals of Mathematics} 199 (2024) 741-793.

\bibitem{PZ} {\sc A. Poltoratski and A. R. Zhang} {"Periodic approximations in inverse spectral problems for canonical Hamiltonian systems",} {\it Journal of Functional Analysis} 284.11(2023).

\bibitem{Remling}{\sc C. Remling.} {"Spectral Theory of Canonical Systems",}
De Gruyter Studies in Mathematics, De Gruyter, 2018.

\bibitem{Rom}{R. \sc Romanov.} {"Canonical Systems and de Branges Spaces",} 2014. \url{arXiv:1408.6022}.

\bibitem{Simon}{\sc B. Simon.} {"Orthogonal polynomials on the unit circle, part I",} Vol. 54. Colloquium Publications. American Mathematical Society, 2004.

\bibitem{Trench} {W. F. Trench.} {"An Algorithm for the Inversion of Finite Toeplitz Matrices",} {\it Journal of the Society for Industrial and Applied Mathematics} 12.3 (1964), 515-522.

\bibitem{Benedek} {B. Valk\'o and B. Vir\'ag.} {"Schr\"odinger operators and de Branges spaces",} {\it Geometric and Functional Analysis} 32(2022), 1160-1231.

\bibitem{Zhang} {A. R. Zhang.} {"Convergence from the discrete to the continuous non-linear Fourier transform",} {\it Trans. Amer. Math. Soc.} 378 (2025), 1483-1501.


\end{thebibliography}
\end{document}